\newtheorem{theorem}{Theorem}[section]
\newtheorem{lemma}[theorem]{Lemma}
\newtheorem{proposition}[theorem]{Proposition}
\newtheorem{corollary}[theorem]{Corollary}
\theoremstyle{definition}
\newtheorem{definition}[theorem]{Definition}
\newtheorem{remark}[theorem]{Remark}
\numberwithin{equation}{section}
\newcommand{\R}{\mathbb{R}}
\newcommand{\Rn}{{\mathbb{R}^n}}
\newcommand{\Ha}{\mathcal{H}}
\newcommand{\M}{\mathcal{M}}
\newcommand{\ve}{\varepsilon}
\newcommand{\esssup}{\operatornamewithlimits{ \relax {esssup }}}
\def\loc{\qopname\relax o{loc}}
\def\dist{\qopname\relax o{dist}}
\def\phi{\varphi}
\renewcommand{\epsilon}{\varepsilon}
\begin{document}

\title{On Choquet integrals and Sobolev type inequalities}

\author{Petteri Harjulehto}
\address{Petteri Harjulehto, Department of Mathematics and Statistics, FI-00014 University of Helsinki,  Helsinki, Finland}
\email{petteri.harjulehto@helsinki.fi}

\author{Ritva Hurri-Syrj\"anen}
\address{Ritva Hurri-Syrj\"anen, Department of Mathematics and Statistics,
FI-000 University of Helsinki, Helsinki, Finland}
\email{ritva.hurri-syrjanen@helsinki.fi}

\subjclass[2010]{Primary 46E35, 31C15, Secondary 26B35, 26D10}



\keywords{Hausdorff Content, Choquet Integral,  Sobolev Embeddings, Morrey's Inequality, Poincar\'e-Sobolev Inequalities, Trudinger's Inequality}

\begin{abstract}
We consider integrals in the sense of Choquet with respect to the $\delta$-dimensional Hausdorff content 
for  continuously differentiable functions defined on open, connected sets  in the Euclidean $n$-space, $n\geq 2$,
$0<\delta\le n$.  In particular, 
for these functions we prove Sobolev  inequalities 
in the limiting case $p=\delta /n$ and  in the case $p>\delta$, here $p$ is the integrability exponent of the absolute value of the gradient of any given function.
The results
complement
previously known Poincar\'e-Sobolev   and Morrey inequalities. 
\end{abstract}

\maketitle


\section{Introduction}

We study embeddings of continuously differentiable functions  $u$ defined on open, connected sets  $\Omega$ 
with some regularity  in $\Rn$, $n\geq 2$. Let $\delta \in (0,n]$ be given. Our assumption is that  the Choquet integral with respect to the  
$\delta$-dimensional Hausdorff content 
of the absolute  value  of  the gradient
to the power $p$ with some $p\geq \delta /n$
 is finite, that is
\[
\int_{\Omega} |\nabla u(x)|^{p} \, d \Ha^{\delta}_\infty = \int_0^\infty \Ha^{\delta}_\infty\big(\{x \in \Omega : |\nabla u(x) |^p>t\}\big) \, dt < \infty\,.
\]
For the values of $p$ there are three different cases
$\frac{\delta}{n} \le p < \delta$, $p=\delta$, and $p> \delta$ corresponding to the embeddings.
Whenever the dimension of the Hausdorff content $\delta$ equals  to the dimension of the Euclidean space  $\Rn$,  some of our results recover  the corresponding  
known classical 
inequalities.

The case $\frac{\delta}{n} < p < \delta$ was considered  in  \cite[Theorem 3.7]{HH-S_JFA} 
 and the case $\frac{\delta}{n} <p <\infty$ in \cite[Theorem 3.2]{HH-S_JFA}.
Fractional Sobolev spaces embeddings were proved  in \cite[Theorems 1.3, 1.5-1.7]{PonceSpector20}. 
The case $p=\delta$ was considered in 
\cite[Theorems 1.3, 1.5, Corollaries 1.4, 1.6]{MartinezSpector21}
and  \cite[Theorems 1.1, 5.9, Corollary 1.2]{HH-S_AAG}.
We recall also Sobolev embeddings related to Bessel and Riesz capacities in \cite[Theorems 1.4, 1.5]{OoiPhuc22}.
The present work can be seen as a complement to  the previous results.
Our main concern in the present paper is  the limiting case $p=\delta /n$ and the case $p>\delta$.
We recall the basic properties of Hausdorff content and Choquet integrals in Section \ref{Section:Def} trying to make the text as self contained as possible.
Poincar\'e-Sobolev inequalities are studied in Section \ref{Section:PS}.
One of the results there
is the following theorem.

\begin{theorem}\label{cor:SP-2d-epsilon}
Suppose that  $\Omega$ is a bounded  $(\alpha, \beta)$-John domain in $\Rn$, $n\geq 2$.
If $\delta \in (0,n]$ is fixed, then the inequality
\begin{equation*}
\inf_{b \in \R}  \Big(\int_\Omega |u(x) -b|^{\frac{\delta}{n-1}} d \Ha^{\delta}_\infty \Big)^
{\frac{n-1}{\delta}} \le c \Big(\int_{\Omega} |\nabla u(x)|^{\frac{\delta}{n}} \, d \Ha^{\delta- \ve}_\infty \Big)^{\frac{n}{\delta}}
\end{equation*}
holds for all $\ve \in(0, \delta(1-\frac1n))$ and
for all $u \in C^1(\Omega)$.
 Here $c$ is a constant which depends only on $\alpha, \beta, \delta, \ve, n,$ and
$\Ha^{\delta}_\infty(\Omega)$.
\end{theorem}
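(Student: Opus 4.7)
My plan follows the classical Poincaré--Sobolev template on John domains, adapted to the Choquet--Hausdorff setting at the endpoint $p=\delta/n$. The first step is to exploit the $(\alpha,\beta)$-John structure of $\Omega$: for each $x\in\Omega$, build a Whitney-type chain of cubes joining a fixed central cube $Q_0$ to a terminal cube containing $x$, with the usual bounded-overlap and $\diam(Q_j)\simeq\dist(Q_j,\partial\Omega)$ properties controlled by $\alpha,\beta$. Setting $b:=u_{Q_0}$, iteration of the $L^1$ Poincaré inequality along consecutive overlapping cubes, followed by a telescoping sum, yields a pointwise bound
\[
|u(x)-u_{Q_0}|\le c\int_{\Omega}\frac{|\nabla u(y)|}{|x-y|^{n-1}}\,dy = c\, I_1(|\nabla u|\chi_\Omega)(x),
\]
where $I_1$ denotes the Riesz potential of order one and $c$ depends only on $\alpha,\beta,n$. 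This reduces the theorem to a bound on the Choquet integral of $I_1 f$ with $f=|\nabla u|$.

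The core of the proof is then an Adams-type inequality in the Choquet--Hausdorff setting,
\[
\Big(\int_{\Omega}|I_1 f(x)|^{\delta/(n-1)}\,d\Ha^{\delta}_\infty\Big)^{(n-1)/\delta}\le c\, \Big(\int_{\Omega}|f(x)|^{\delta/n}\,d\Ha^{\delta-\ve}_\infty\Big)^{n/\delta},
\]
with the dimension on the right deliberately reduced by $\ve$. A scaling check of the Sobolev exponent $q=\delta p/(\delta-p)$ at $p=\delta/n$ yields $q=\delta/(n-1)$, so the pair $(\delta/n,\delta/(n-1))$ is exactly the Sobolev endpoint for a single dimension $\delta$, where the strong-type inequality is known to fail. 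The replacement $\delta\mapsto\delta-\ve$ on the source side pushes the exponent $p=\delta/n$ into the open range $p>(\delta-\ve)/n$, i.e. the non-endpoint regime covered by \cite[Theorem~3.7]{HH-S_JFA}. The restriction $\ve<\delta(1-1/n)$ is precisely what keeps $p<\delta-\ve$, the natural upper constraint for the Riesz-potential method.

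The main technical obstacle is establishing the mixed-dimension Adams-type bound cleanly, because the left-hand dimension $\delta$ differs from the right-hand dimension $\delta-\ve$. I would proceed via a Hedberg-type splitting of $I_1 f(x)$ into near and far parts at a scale $r=r(x)$, bound the near part by $c\,r\, M_{\Ha^{\delta-\ve}_\infty} f(x)$ through a standard dyadic sum on annuli, bound the far part by a fractional quantity whose decay in $r$ is governed by the $\Ha^{\delta-\ve}_\infty$-size of $\Omega$, optimize in $r$, and finish with a strong-type bound for the relevant fractional maximal operator on Choquet--Hausdorff spaces in the spirit of \cite[Theorem~3.7]{HH-S_JFA}. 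The delicate point is the dimension book-keeping: the exponent counts $(n-1)/\delta$ on the left and $n/\delta$ on the right must match the mismatched Hausdorff dimensions, and the hypothesis $\Ha^{\delta}_\infty(\Omega)<\infty$ enters here to absorb a boundary term produced by the normalization $b=u_{Q_0}$ and to handle the non-additivity of Hausdorff content when passing from local to global estimates.
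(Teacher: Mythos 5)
Your overall strategy is sound in spirit but takes a noticeably different and more labor-intensive route than the paper, and it leaves the key technical step vague. The crucial insight you identify is correct: lowering the Hausdorff dimension on the right from $\delta$ to $\delta-\ve$ pushes $p=\delta/n$ strictly inside the non-endpoint window $\big(\tfrac{\delta-\ve}{n},\,\delta-\ve\big)$, which is exactly the observation the paper makes. However, the paper does not go back to the Riesz potential and rebuild an Adams-type inequality via a Hedberg splitting. Instead, it proceeds by a short reduction chain: Proposition~\ref{GeneralizationOV} (the monotonicity estimate $\Ha^{\delta_2}_\infty(E)^{1/\delta_2}\le \Ha^{\delta_1}_\infty(E)^{1/\delta_1}$ and its integral consequence) is applied on the \emph{left-hand side} to convert the $\Ha^{\delta}_\infty$-integral into an $\Ha^{\delta-\ve}_\infty$-integral of a lower power; then the already-established Poincar\'e--Sobolev inequality (Theorem~\ref{thm:Poincare}, i.e.\ \cite[Theorem~3.7]{HH-S_JFA}) is invoked for dimension $\delta-\ve$ with $\kappa=0$ and $p=\delta/n$; this yields the intermediate Theorem~\ref{cor:limit-case-1} with exponent $\tfrac{\delta}{\,n-1-\ve n/\delta\,}$ on the left, and a final H\"older inequality lowers that exponent to $\tfrac{\delta}{n-1}$ at the cost of the factor $\Ha^{\delta}_\infty(\Omega)^{\ve n/(\delta(n-1))}$.

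Two concrete issues with your proposal. First, you do not address how to pass between the two Hausdorff dimensions on the left and right; the clean tool for this is Proposition~\ref{GeneralizationOV}, which your plan never invokes. Without it, your mixed-dimension Adams inequality with left exponent $\delta/(n-1)$ and content $\Ha^{\delta}_\infty$ against $\Ha^{\delta-\ve}_\infty$ on the right would have to be proved essentially from scratch, and the Hedberg/maximal-function steps you sketch (including the nonstandard object $M_{\Ha^{\delta-\ve}_\infty}$) are left at the level of intentions rather than proved lemmas. Second, you mislocate where the dependence on $\Ha^{\delta}_\infty(\Omega)$ enters: it is not needed to absorb the free normalization $b=u_{Q_0}$ (indeed the paper remarks that the stronger inequality with $b=u_B$ holds with no extra cost), but rather to pay for the H\"older step that lowers the left exponent from $\tfrac{\delta}{\,n-1-\ve n/\delta\,}$ to $\tfrac{\delta}{n-1}$. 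If you instead follow the paper's chain --- Proposition~\ref{GeneralizationOV}, then Theorem~\ref{thm:Poincare} at dimension $\delta-\ve$, then H\"older --- you obtain the result without rebuilding any potential-theoretic machinery.
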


We consider the case $p=\delta$ in Section \ref{Section:Trudinger}, slightly improving a previous Trudinger inequality 
with Choquet integrals for John domains, Theorem \ref{thm:main-p=n}.
We give corresponding results  of Sections \ref{Section:PS} and \ref{Section:Trudinger}   for  continuously differentiable functions with compact support defined on bounded domains, and do so in Section \ref{Section:C^1_0}. One on the results there is the following weak-type estimate.

\begin{corollary}\label{weaklimit-0-0}
Suppose that  $\Omega$ is a bounded domain in $\Rn$ and $\delta \in (0, n]$ is given.
If 
$u \in C^1_0(\Omega)$, then 
for every $t>0$ 
\begin{equation*}
 \Ha_\infty^{\delta}\Bigl(\{x\in\Omega :\vert u(x)\vert >t\}\Bigr)^{\frac{n-1}{\delta}}
\le 
\frac{c}{t}
 \Big(\int_{\Omega}\vert \nabla u(x)\vert^{\frac{\delta}{n}}\,d\Ha_\infty^{\delta} \Big)^{\frac{n}{\delta}}\,
\end{equation*}
where  $c$  is a constant  which depends only on $n$ and $\delta$.
\end{corollary}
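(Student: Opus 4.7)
The plan is to deduce this weak-type estimate as a short Chebyshev-type corollary of the corresponding strong-type Sobolev inequality for $C^1_0$ functions, which I expect to be established as the main result of Section \ref{Section:C^1_0} immediately preceding this statement, namely
\[
\Big(\int_\Omega |u(x)|^{\frac{\delta}{n-1}}\, d\Ha^{\delta}_\infty\Big)^{\frac{n-1}{\delta}} \le c \Big(\int_{\Omega} |\nabla u(x)|^{\frac{\delta}{n}}\, d\Ha^{\delta}_\infty \Big)^{\frac{n}{\delta}}
\]
for $u\in C^1_0(\Omega)$. In contrast to the Poincar\'e-type statement in Theorem \ref{cor:SP-2d-epsilon}, no $\ve$-loss in the Hausdorff dimension is needed on the right-hand side, because compact support lets one extend $u$ by zero and reduce matters to a Sobolev inequality on a fixed auxiliary cube or ball in $\Rn$.

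The remaining ingredient is a Chebyshev-type inequality for the Choquet integral: for any nonnegative $f$, any $q>0$, and any $t>0$,
\[
\int_\Omega f(x)^q\, d\Ha^\delta_\infty = \int_0^\infty \Ha^\delta_\infty\big(\{f^q>s\}\big)\, ds \ge \int_0^{t^q} \Ha^\delta_\infty\big(\{f>t\}\big)\, ds = t^q\, \Ha^\delta_\infty\big(\{f>t\}\big)\,,
\]
where the inequality uses only the monotonicity of $\Ha^\delta_\infty$ together with the inclusion $\{f>t\}\subseteq\{f^q>s\}$ valid for $s\le t^q$. Applying this with $f=|u|$ and $q=\frac{\delta}{n-1}$, raising both sides to the power $\frac{n-1}{\delta}$, and inserting the strong-type Sobolev inequality yields
\[
t\,\Ha^\delta_\infty\big(\{|u|>t\}\big)^{\frac{n-1}{\delta}} \le \Big(\int_\Omega |u|^{\frac{\delta}{n-1}}\, d\Ha^\delta_\infty\Big)^{\frac{n-1}{\delta}} \le c\Big(\int_\Omega |\nabla u|^{\frac{\delta}{n}}\, d\Ha^\delta_\infty\Big)^{\frac{n}{\delta}}\,.
\]
Dividing by $t$ gives exactly the claimed inequality.

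The only genuinely nontrivial step is the underlying strong-type Sobolev inequality. I would expect its proof to proceed from the Riesz-potential bound $|u(x)| \le c\int_\Omega |\nabla u(y)|\,|x-y|^{1-n}\, dy$, available thanks to compact support, combined with a capacitary or Whitney-dyadic estimate for Riesz potentials against $\delta$-dimensional Hausdorff content in the spirit of the methods of \cite{HH-S_JFA} and \cite{HH-S_AAG}. The main obstacle is attaining the sharp target exponent $\frac{\delta}{n-1}$ at the limiting integrability $p=\frac{\delta}{n}$ without any $\ve$-loss; such a loss seems unavoidable in the Poincar\'e setting of Theorem \ref{cor:SP-2d-epsilon} but should not be required here, since extension by zero to $\Rn$ bypasses the geometric difficulties of the John domain.
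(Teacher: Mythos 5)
Your Chebyshev reduction is correct as far as it goes, but the premise it reduces to — the endpoint strong-type Sobolev inequality
\[
\Big(\int_\Omega |u|^{\frac{\delta}{n-1}}\, d\Ha^\delta_\infty\Big)^{\frac{n-1}{\delta}} \le c\Big(\int_\Omega |\nabla u|^{\frac{\delta}{n}}\, d\Ha^\delta_\infty\Big)^{\frac{n}{\delta}}
\]
for $u\in C^1_0(\Omega)$ with $\delta<n$ — is not proved in the paper and, contrary to your expectation, is not "the main result of Section \ref{Section:C^1_0}." Every strong-type estimate in that section carries an $\ve$-loss: Corollaries \ref{epsilon_0}, \ref{epsilon_righthandside} put $\delta/(n-\ve)$ rather than $\delta/n$ on the right, and the two unnamed theorems in Section \ref{Section:C^1_0} replace $\Ha^\delta_\infty$ on the right by $\Ha^{\delta-\ve}_\infty$ or $\Ha^{\delta_2(1-1/n)}_\infty$. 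The paper explicitly warns (remark between Corollary \ref{epsilon_righthandside} in Section \ref{Section:PS} and Theorem \ref{cor:limit-case-1}) that the constants in these $\ve$-perturbed inequalities appear to blow up as $\ve\to 0^+$, so one cannot obtain your assumed lemma by passing to the limit. Your claim that compact support lets one "bypass" the loss is not substantiated: the obstruction is not the geometry of $\Omega$ but the endpoint $p=\delta/n$, at which the Hardy--Littlewood maximal operator satisfies only a weak-type estimate with respect to $\Ha^\delta_\infty$ (Adams), exactly as at $p=1$ classically.

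The paper's actual route is quite different and does not pass through any strong-type estimate. Corollary \ref{weaklimit-0-0} is obtained by setting $\kappa=0$ in Theorem \ref{weaklimit-0-kappa}, whose proof (following Theorem \ref{weaklimit_kappa}) runs as follows: (i) use the pointwise bound $|u(x)|\le c\, I_1|\nabla u|(x)$ available from compact support (inequality \eqref{pointwise_0}); (ii) apply the Hedberg-type interpolation of \cite{HH-S_Proc2022} to dominate $I_1|\nabla u|(x)$ by a power of the fractional maximal function $\M_\kappa|\nabla u|(x)$ times a power of $\big(\int|\nabla u|^{\delta/n}d\Ha^\delta_\infty\big)^{n/\delta}$; and (iii) invoke Adams' weak-type bound for $\M_\kappa$ with respect to Hausdorff content. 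This produces the weak-type conclusion directly, without ever needing the strong-type inequality your argument relies on. If you want to keep the Chebyshev structure, you would first have to prove the endpoint strong-type estimate, which is a genuinely harder (and open, in this setting) problem.
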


The case $p>\delta$ is considered in Section \ref{Section:p>delta}, where we prove Morrey inequalities.

{


\section{Definitions and notation}\label{Section:Def}
We are working on the  Euclidean $n$-space $\Rn$, $n\geq 2$.
An  open $n$-dimensional  ball  $B^n(x,r)$ centred at $x$ with radius $r>0$ is written as $B(x,r)$.

We recall the definition of Hausdorff content of a set $E$ in $\Rn$,  \cite[2.10.1, p.~169]{Federer}, \cite[Chapter 3]{Adams15}.

\begin{definition}[Hausdorff content]\label{defn:Haus}
Let $E$ be a set in $\Rn$, $n \ge 2$. Suppose that
$\delta \in (0, n]$.
The $\delta$-Hausdorff content of $E$ is defined by
\begin{equation*}
\Ha_\infty^{\delta} (E) := \inf \bigg\{ \sum_{i=1}^\infty r_i^{\delta}: E \subset \bigcup_{i=1}^\infty B(x_i, r_i)\,, r_i>0\bigg\}\,,
\end{equation*}
where the infimum is taken over all  countable (or finite) ball coverings of $E$.
\end{definition}
The terms the  $\delta$-Hausdorff capacity and
 and Hausdorff content of $E$ of dimension $\delta$ are also used.
 Recently, properties of Hausdorff content  have been studied extensively,
  for example \cite{DafniXiao04},
\cite{YangYuan08}, \cite{Tang},  \cite{SaitoTanakaWatanabe2016}, \cite{Liu16}, 
\cite{SaitoTanakaWatanabe2019}, \cite{Saito2019}, \cite{SaitoTanaka2022}, \cite{PonceSpector23},
\cite{ChenSpector}.

Let $0 <\delta\le n$ and $0<\rho <\infty$. 
We  recall the definition of the $\delta$-dimensional  Hausdorff measure for $E \subset \Rn$,
\[
\Ha^\delta (E) := \lim_{\rho \to 0^+}  \inf \bigg\{ \sum_{i=1}^\infty r_i^{\delta}: E \subset \bigcup_{i=1}^\infty B(x_i, r_i) \text{ and } r_i \le \rho \text{ for all } i\bigg\},
\]
where the infimum is taken over 
 all such countable  (or finite) ball coverings of $E$ that the radius of a  ball is at most $\rho$.
Thus  there are finite positive constants $c_1(n)$ and $c_2(n)$ such that $$c_1(n)\Ha^n(E) \le |E| \le c_2(n)\Ha^n(E)$$ for all Lebesgue measurable 
 sets $E$ in $\Rn$.
We recall the following property.
There exists a constant $c(n)>0$ such that  for all  sets $E$ in $\Rn$ the inequalities
\begin{equation}\label{content_measure}
\Ha_\infty^n(E) \le \Ha^n(E)\le c(n)\Ha_\infty^n(E),
\end{equation} 
hold
\cite[Proposition 2.5]{HH-S_JFA}.

We recall the definition of Choquet integrals over subsets of  $\Rn$  with respect to the Hausdorff content $\Ha_\infty^{\delta}$ \cite{Cho53}.
\begin{definition}[Choquet integral]\label{ChoquetIntegral}
Let $\Omega$ in $\Rn$  be an open  set, $f:\Omega\to [0,\infty]$  a function defined on $\Omega$ with values on $[0,\infty]$,  and
$\delta\in (0,n]$. The integral in the sense of Choquet with respect to Hausdorff content is defined by
\begin{equation}\label{IntegralDef}
\int_\Omega f(x) \, d \Ha^{\delta}_\infty := \int_0^\infty \Ha^{\delta}_\infty\big(\{x \in \Omega : f(x)>t\}\big) \, dt.
\end{equation}
\end{definition}
Since  $\Ha^{\delta}_\infty$ is monotone,
 for  every  function $f:\Omega \to [0, \infty]$
the corresponding distribution function
$t \mapsto \Ha^{\delta}_\infty\big(\{x \in \Omega : f(x)>t\}\big)$ 
is decreasing with respect to $t$ and 
hence also measurable
 with respect to Lebesgue measure. Thus, the integral on the right-hand side of (\ref{IntegralDef})
is well defined as a Lebesgue integral. 
Although the Choquet  integral is well defined for non-measurable functions we study here only measurable functions. 
We have a work in progress for non-measurable functions, too.
For the properties of Choquet integrals we refer to
\cite{Adams86}, 
\cite{Adams98}, \cite{Denneberg94}, \cite{OV}, \cite{DafniXiao04},  \cite{AX2012}, \cite{Adams15}, \cite{Kawabe19},
\cite{HH-S_AAG}, \cite{HH-S_JFA}, \cite{PonceSpector23}.

We recall here also the definition of the fractional Hardy-Littlewood maximal operator.
Let $f \in L^1_{\loc}(\Rn)$.  If $\kappa \in [0, n)$, the  fractional  centred Hardy-Littlewood maximal function of $f$ is defined by
\[
\M_\kappa f(x) := \sup_{r>0} r^{\kappa -n} \int_{B(x, r)} |f(y)| \, dy
\]
for each $x\in\Rn$.
The  non-fractional maximal function $\M_0 f$ is written as $\M f$.

The  maximal operator 
 $\M f$  is bounded in the sense of Choquet with respect to Hausdorff content when $p> \frac{\delta}{n}$. The maximal operator satisfies a weak-type estimate if $p= \frac{\delta}n$.
We refer to  \cite[Theorem A]{Adams86}, \cite[Theorem 7.(a)]{Adams98},  and \cite[Theorem]{OV}. 

Let us denote the $\eta$-order Riesz potential, $\eta \in(0, n)$ by
\[
I_\eta f(x) := \int_\Rn \frac{|f(y)|}{|x-y|^{n-\eta}} \, dy
\]
for  functions $f \in L^1_{\loc} (\Omega)$ 
 with an agreement that $f$ is defined to be zero outside $\Omega$.

We recall the following inequalities:
If $\Omega$ is an open set in  $\Rn$ and $0 < \delta \le n$, then
 there exists a constant $c(n)$ such that the inequalities
\begin{equation}\label{basic}
\frac{1}{c(n)}  \int_\Omega |f(x)|  \, d \Ha^{ n}_\infty \le \int_\Omega |f(x)| \, dx \le c(n)  \int_\Omega |f(x)|  \, d \Ha^{ n}_\infty
\end{equation}
are valid 
for all measurable  functions $f: \Omega \to [-\infty, \infty]$.
Indeed, the inequalities (\ref{basic}) follow from (\ref{content_measure}) and the isodiametric inequality.

Validity of  inequalities where integrals are Choquet integrals with respect to Hausdorff content have been studied in
\cite{Adams86}, \cite{Adams98},
\cite{PonceSpector20}, \cite{MartinezSpector21}, \cite{HH-S_AAG}, \cite{OoiPhuc22}, \cite{HH-S_JFA}.
We refer to
\cite{PonceSpector23} where also other  monotone set functions than Hausdorff content are considered and
results on Choquet integrals with minimal assumptions on these set functions  
are proved.

We state and prove our next proposition, although after finishing our results we found out 
 that a similar proposition was  just recently  proved also  in \cite[Proposition 2.3]{ChenOS_pp}.

\begin{proposition}\label{GeneralizationOV}
 Let $\Omega$ be an open subset in $\Rn$ and  $0 < \delta_1 < \delta_2 \le n$.
 Then the  inequality
\begin{equation*}
\biggl(\int_\Omega |f(x)|\, d \Ha_{\infty}^{\delta_2} \biggr)^{1/{\delta_2}}
\le 
\biggl(
 \frac{\delta_2}{\delta_1}\biggr)^{1/\delta_2} \biggl(\int_\Omega |f(x)|^{\frac{\delta_1}{\delta_2}} \, d \Ha^{\delta_1}_\infty \biggr)^{1/\delta_1}
\end{equation*}
holds for all functions $f: \Omega \to [-\infty, \infty]$.
\end{proposition}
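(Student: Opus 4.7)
The plan combines three ingredients: (i) a pointwise Hausdorff-content comparison across dimensions, (ii) Choquet's layer-cake representation (\ref{IntegralDef}), and (iii) a one-variable Hardy-type bound for decreasing functions.

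First I would prove (i), namely that for every set $E\subset\Rn$,
\[
\Ha^{\delta_2}_\infty(E)\le\Ha^{\delta_1}_\infty(E)^{\delta_2/\delta_1}.
\]
Given any admissible cover $E\subset\bigcup_i B(x_i,r_i)$, the elementary $\ell^p$-monotonicity $(\sum_i r_i^{\delta_2})^{1/\delta_2}\le(\sum_i r_i^{\delta_1})^{1/\delta_1}$, valid because $\delta_1\le\delta_2$, gives the claim after taking infima over covers.

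Next I would align the two Choquet integrals by one change of variables. Write $E_t:=\{x\in\Omega:|f(x)|>t\}$ and
\[
g(s):=\Ha^{\delta_1}_\infty\bigl(\{|f|^{\delta_1/\delta_2}>s\}\bigr)=\Ha^{\delta_1}_\infty(E_{s^{\delta_2/\delta_1}}),
\]
which is decreasing in $s$. Layer cake immediately gives $\int_\Omega|f|^{\delta_1/\delta_2}\,d\Ha^{\delta_1}_\infty=\int_0^\infty g(s)\,ds$. For the other side, layer cake followed by the substitution $t=s^{\delta_2/\delta_1}$ and then (i) applied pointwise to $E_{s^{\delta_2/\delta_1}}$ yields
\[
\int_\Omega|f|\,d\Ha^{\delta_2}_\infty=\frac{\delta_2}{\delta_1}\int_0^\infty\Ha^{\delta_2}_\infty(E_{s^{\delta_2/\delta_1}})\,s^{\delta_2/\delta_1-1}\,ds\le\frac{\delta_2}{\delta_1}\int_0^\infty g(s)^{\delta_2/\delta_1}\,s^{\delta_2/\delta_1-1}\,ds.
\]

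Setting $\alpha:=\delta_2/\delta_1\ge 1$, the proof reduces to the Hardy-type inequality
\[
\int_0^\infty g(s)^\alpha s^{\alpha-1}\,ds\le\Bigl(\int_0^\infty g(s)\,ds\Bigr)^\alpha
\]
for decreasing $g\ge 0$. Since $g$ is decreasing one has $sg(s)\le F(s):=\int_0^s g(u)\,du$, whence $g(s)^\alpha s^{\alpha-1}\le g(s)F(s)^{\alpha-1}=(F^\alpha/\alpha)'$, and integrating over $(0,\infty)$ gives $F(\infty)^\alpha/\alpha\le F(\infty)^\alpha$. Combining with the display above and raising to the $1/\delta_2$ power produces exactly the stated proposition with constant $(\delta_2/\delta_1)^{1/\delta_2}$.

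The only real obstacle is choosing the change of variables $t=s^{\delta_2/\delta_1}$ so that the pointwise comparison (i) and the Hardy step apply cleanly in a single shot; both ingredients themselves are elementary, and step (i) is tight on balls so no sharpness is lost there. (In fact the stronger Hardy bound with $1/\alpha$ on the right shows the factor $(\delta_2/\delta_1)^{1/\delta_2}$ is not best possible, but this is enough for the stated result.)
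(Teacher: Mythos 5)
Your proof is correct and follows essentially the same route as the paper's: the $\ell^p$-monotonicity argument (the paper phrases the same fact via concavity of $t\mapsto t^{\delta_1/\delta_2}$) gives the pointwise comparison $\Ha^{\delta_2}_\infty(E)\le\Ha^{\delta_1}_\infty(E)^{\delta_2/\delta_1}$, and the layer-cake identity together with the substitution $t=s^{\delta_2/\delta_1}$ reduces the claim to a Hardy-type bound for the decreasing function $g$. The only genuine difference is in the final step: the paper uses the crude estimate $s\,h(s)\le\int_0^\infty h(u)\,du$ and then splits exponents, giving the constant $\delta_2/\delta_1$, whereas you bound $s\,g(s)\le F(s)=\int_0^s g(u)\,du$ and integrate the exact derivative $(F^\alpha/\alpha)'$, which in fact establishes the sharper inequality with constant $1$ (tight already on indicator functions, by step (i)); you then deliberately relax this to recover the stated constant.
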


\begin{proof}
Since $\delta_1 <\delta_2$, the function $t\mapsto t^{\delta_1/\delta_2}$ is concave on $[0,\infty )$.
Hence for all $r_i\geq 0$
the inequality
\begin{equation*}
\Big(\sum r_i^{\delta_2}\Big)^{\delta_1/\delta_2}\le \sum r_i^{\delta_1}
\end{equation*}
holds.  Thus, if  $E$ is a set in $\Rn$, then by the definition of the Hausdorff content 
\begin{equation*}
\Big(\Ha_{\infty}^{\delta_2}(E)\Big)^{1/\delta_2}\le\Big(\Ha_{\infty}^{\delta_1}(E)\Big)^{1/\delta_1}\,.
\end{equation*}
The changing of the variables, $t= s^{\delta_2/\delta_1}$, gives
\[
\begin{split}
\int_{\Omega}\vert f(x)\vert\,d\Ha_{\infty}^{\delta_2}
&= \int_0^\infty \Ha^{\delta_2}_\infty\Big(\{x: |f(x)|>t\} \Big) \, dt\\
&=   \frac{\delta_2}{\delta_1} \int_0^\infty 
s^{\frac{\delta_2}{\delta_1}-1} \,  \Ha^{\delta_2}_\infty\Big(\{x: |f(x)|>s^{\delta_2/\delta_1}\} \Big)  \, ds \\
&\le   \frac{\delta_2}{\delta_1} \int_0^\infty 
s^{\frac{\delta_2}{\delta_1}-1} \, \Ha^{\delta_1}_\infty\Big(\{x: \vert f(x)\vert^{\delta_1/\delta_2}>s\} \Big)^{\frac{\delta_2}{\delta_1}}  \, ds \,.
\end{split}
\]
 The function  $h(t) := \Ha^{\delta_1}_{\infty}(\{x: |f(x)|^{\delta_1/\delta_2}>t\}$ is decreasing. Hence, we obtain
\[
u h(u) = \int_{0}^u h(u) \, ds \le \int_{0}^u h(s) \, ds  \le \int_0^\infty h(s) \,ds.
\]
 Thus, combining the estimates implies
\[
\begin{split}
&\int_{\Omega}\vert f(x)\vert\,d\Ha_{\infty}^{\delta_2}\\
&\le \frac{\delta_2}{\delta_1} \int_0^\infty \bigg(s \, \Ha^{\delta_1}_\infty\Big(\{x: \vert f(x)\vert^{\delta_1/\delta_2}>s\} \Big)\bigg)^{\frac{\delta_2} {\delta_1}-1} \Ha^{\delta_1}_\infty\Big(\{x: \vert f(x)\vert^{\delta_1/\delta_2}>s\} \Big)\, ds \\
&\le  \frac{\delta_2}{\delta_1} \Big( \int_0^\infty h(s) \,ds \Big)^{\frac{\delta_2}{\delta_1} -1} \int_0^\infty h(t)\, dt\\
&\le  \frac{\delta_2}{\delta_1} \Big(  \int_0^\infty h(s) \,ds \Big)^{\frac{\delta_2}{\delta_1}} 
=  \frac{\delta_2}{\delta_1}\Big(\int_\Omega |f(x)|^{\frac{\delta_1}{\delta_2}} \, d \Ha^{\delta_1}_\infty \Big)^{\frac{\delta_2}{\delta_1}}.  \qedhere
\end{split}
\]
\end{proof}

Before going further we recall the definition of  bounded John domains.
An inner radius and outer radius  notion for domains was introduced by F. John  in \cite{John}. Later, domains with  this property were renamed John domains
\cite{MartioSarvas}. It has turn out that this notion defines extremely well  properties of  domains which are essentially like balls in  the potential theory aspect.

\begin{definition}\label{bounded-john}
Suppose that  $\Omega$ is a bounded domain in $ \Rn$, $n\geq 2$. The domain $\Omega$ is an $(\alpha, \beta)$-John domain if there exist constants $0< \alpha \le \beta<\infty$ and a point $x_0 \in \Omega$ such that each point $x\in \Omega$ can be joined to $x_0$ by a rectifiable curve $\gamma_x:[0,\ell(\gamma_x)] \to \Omega$, parametrised by its arc length, such that $\gamma_x(0) = x$, $\gamma_x(\ell(\gamma_x)) = x_0$, $\ell(\gamma_x)\leq \beta\,,$ and
\[
 \dist\big(\gamma_x(t), \partial \Omega \big) 
\geq  \frac{\alpha}{\beta}t
\quad \text{for all} \quad t\in[0, \ell(\gamma_x)  ].
\]
\end{definition}

The point $x_0$ is called a John centre.
Whenever $\Omega$  is a bounded John domain with respect to some point $x_0$,  it is  also a John domain with respect to any other point in $\Omega$.
Constants depend on the John centre though.
Examples of John domains are convex domains and domains with Lipschitz boundary, but also domains with fractal boundaries such as the von Koch snow flake.
Cuspidal domains like $s$-cusps with $s>1$ are not allowed, that is outward spires for domains  are not allowed.
Poincare-Sobolev inequalities with Choquet integrals  for these domains are considered in \cite{HH-S_Proc2022}.

We remark that the Euclidean $n$-space is an example of unbounded John domains.  For a definition of unbounded John domains and examples we refer to  \cite[Definition 2.1]{HSV}.
In particular, Morrey inequalities with
 Choquet integrals over the whole $\Rn$  are given in Theorems \ref{Morrey_first}, \ref{Morrey_second}, and Corollary \ref{Morrey_third}.


\section{Poincar\'e-Sobolev inequalities in the case $\delta /n \le p< \delta$}
\label{Section:PS}

We recall the following Poincar\'e-Sobolev inequality from \cite[Theorem 3.7]{HH-S_JFA}.

\begin{theorem}\cite[Theorem 3.7]{HH-S_JFA}\label{thm:Poincare}
Suppose that  $\Omega$ is a bounded  $(\alpha, \beta)$-John domain in $\Rn$.
Let   $\delta \in (0, n]$ and $p \in (\delta/n, \delta )$.
Let $\kappa \in [0, 1)$. Then there exists a constant $c$ depending only on $n$, $\delta$, $\kappa$, $p$, 
 and John constants $\alpha$ and $\beta$ such that  
\begin{equation*}
 \inf_{b \in \R}  \Big(\int_\Omega |u(x) -b|^{\frac{p(\delta- \kappa p) }{\delta-p}} d \Ha^{\delta -\kappa p}_\infty \Big)^{\frac{\delta-p}{p(\delta- \kappa p)}}
\le c \Big(\int_{\Omega} |\nabla u(x)|^{p} \, d \Ha^{\delta}_\infty \Big)^{\frac{1}{p}}
\end{equation*}
for all $u \in C^1(\Omega)$.
\end{theorem}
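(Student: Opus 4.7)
The plan is to deduce the endpoint inequality from Theorem \ref{thm:Poincare} by combining Proposition \ref{GeneralizationOV} applied on the gradient side with a Hölder-type reduction on the function side. The key is to apply Theorem \ref{thm:Poincare} at the \emph{same} Hausdorff dimension $\delta$ (not $\delta-\ve$), with $\kappa=0$, but with an auxiliary gradient exponent $p$ chosen so that a subsequent application of Proposition \ref{GeneralizationOV} converts the $p$-th power gradient integral with respect to $\Ha^{\delta}_{\infty}$ into the desired $(\delta/n)$-th power gradient integral with respect to $\Ha^{\delta-\ve}_{\infty}$. A short calculation forces
\[
p:=\frac{\delta^{2}}{n(\delta-\ve)},
\]
and the hypothesis $0<\ve<\delta(1-1/n)$ is precisely what gives $\delta/n<p<\delta$. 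Theorem \ref{thm:Poincare} with this choice and $\kappa=0$ then yields
\[
\inf_{b\in\R}\Bigl(\int_{\Omega}|u(x)-b|^{q}\,d\Ha^{\delta}_{\infty}\Bigr)^{1/q}\le c\,\Bigl(\int_{\Omega}|\nabla u(x)|^{p}\,d\Ha^{\delta}_{\infty}\Bigr)^{1/p},
\]
with Sobolev exponent $q=p\delta/(\delta-p)=\delta^{2}/(\delta(n-1)-n\ve)>\delta/(n-1)$.

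Next I apply Proposition \ref{GeneralizationOV} with $\delta_{2}=\delta$, $\delta_{1}=\delta-\ve$, and $f=|\nabla u|^{p}$. The design of $p$ yields $p(\delta-\ve)/\delta=\delta/n$, so the proposition reads
\[
\Bigl(\int_{\Omega}|\nabla u|^{p}\,d\Ha^{\delta}_{\infty}\Bigr)^{1/\delta}\le\Bigl(\tfrac{\delta}{\delta-\ve}\Bigr)^{1/\delta}\Bigl(\int_{\Omega}|\nabla u|^{\delta/n}\,d\Ha^{\delta-\ve}_{\infty}\Bigr)^{1/(\delta-\ve)}.
\]
Since $\delta/[p(\delta-\ve)]=n/\delta$, raising both sides to the power $\delta/p$ gives
\[
\Bigl(\int_{\Omega}|\nabla u|^{p}\,d\Ha^{\delta}_{\infty}\Bigr)^{1/p}\le c'\Bigl(\int_{\Omega}|\nabla u|^{\delta/n}\,d\Ha^{\delta-\ve}_{\infty}\Bigr)^{n/\delta},
\]
so that after substituting into the inequality from the previous step the right-hand side already has the form required by the target statement, with the left-hand integrand $|u-b|^{q}$ still in place of $|u-b|^{\delta/(n-1)}$.

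It remains to lower the exponent on the function side from $q$ to the target $\delta/(n-1)$. Since $q>\delta/(n-1)$ and $\Ha^{\delta}_{\infty}(\Omega)<\infty$, I invoke a Hölder-type inequality for Choquet integrals with respect to the (strongly subadditive) Hausdorff content: for any $0<r<s$ and $g\ge 0$,
\[
\Bigl(\int_{\Omega}g^{r}\,d\Ha^{\delta}_{\infty}\Bigr)^{1/r}\le C(r,s)\,\Ha^{\delta}_{\infty}(\Omega)^{1/r-1/s}\Bigl(\int_{\Omega}g^{s}\,d\Ha^{\delta}_{\infty}\Bigr)^{1/s},
\]
which can be proved by splitting the layer cake at a cutoff level and optimising the cutoff. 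Applied with $g=|u-b|$, $r=\delta/(n-1)$, $s=q$, followed by taking the infimum over $b\in\R$, this delivers the claim with a constant that absorbs the factor $\Ha^{\delta}_{\infty}(\Omega)^{n\ve/\delta^{2}}$ and depends only on $\alpha,\beta,\delta,\ve,n,\Ha^{\delta}_{\infty}(\Omega)$. The main obstacle I anticipate is arranging for the Hölder reduction to act on the function side (where the integral is against $\Ha^{\delta}_{\infty}$) rather than on the gradient side, so that the sole $\Omega$-dependent constant is a power of $\Ha^{\delta}_{\infty}(\Omega)$ and not of $\Ha^{\delta-\ve}_{\infty}(\Omega)$, matching the stated dependence exactly.
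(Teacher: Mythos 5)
Your proposal does not prove the statement in question. The statement is Theorem \ref{thm:Poincare} itself, which in this paper is an imported result, quoted from \cite[Theorem 3.7]{HH-S_JFA}. Your argument opens with ``deduce the endpoint inequality from Theorem \ref{thm:Poincare}'' and then uses that theorem (at dimension $\delta$, with the auxiliary exponent $p=\delta^{2}/(n(\delta-\ve))$ and $\kappa=0$) as the central ingredient; as a proof of Theorem \ref{thm:Poincare} this is circular, and what it actually establishes is a different statement, namely the endpoint Poincar\'e--Sobolev inequality of Theorem \ref{cor:SP-2d-epsilon} (via Theorem \ref{cor:limit-case-1}). A genuine proof of Theorem \ref{thm:Poincare} would have to reproduce the machinery of \cite{HH-S_JFA}: the pointwise estimate $|u(x)-u_{B}|\le c\, I_{1}|\nabla u|(x)$ valid on John domains, a Hedberg-type bound of the Riesz potential by the fractional maximal function, and the Adams--Orobitg--Verdera boundedness of the (fractional) maximal operator on Choquet spaces with respect to Hausdorff content for exponents above $\delta/n$. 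None of these steps appears in your proposal, so the statement as posed is left unproved.

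For what it is worth, the derivation you do give is essentially correct as a proof of Theorem \ref{cor:SP-2d-epsilon}, and it mirrors the paper's own route with the two tools applied in the opposite order: the paper first lowers the Hausdorff dimension on the function side with Proposition \ref{GeneralizationOV} and then applies Theorem \ref{thm:Poincare} at dimension $\delta-\ve$ with exponent $\delta/n$, whereas you apply Theorem \ref{thm:Poincare} at dimension $\delta$ with the exponent $p=\delta^{2}/(n(\delta-\ve))$ and then use Proposition \ref{GeneralizationOV} on the gradient side; your parameter checks ($\delta/n<p<\delta$ precisely when $\ve<\delta(1-\tfrac1n)$, and $p(\delta-\ve)/\delta=\delta/n$) and the final H\"older-type reduction from the exponent $q=\delta^{2}/(\delta(n-1)-n\ve)$ down to $\delta/(n-1)$, costing a power of $\Ha^{\delta}_{\infty}(\Omega)$, are all consistent with what the paper does. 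But this addresses the wrong statement: you must either supply an independent proof of Theorem \ref{thm:Poincare} or acknowledge it as a citation rather than something to be derived from itself.
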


Choosing $p= \frac{\delta}{n-\ve}$ and $\kappa =0$} in 
Theorem \ref{thm:Poincare}  implies the following corollary.

\begin{corollary}\label{epsilon}
Suppose that  $\Omega$ is a bounded  $(\alpha, \beta)$-John domain in $\Rn$, $n\geq 2$.
If $\delta \in (0, n]$
then  for any fixed $\ve \in (0, n-1)$ there exists a constant $c$ such that  the inequality
\begin{equation*}
\inf_{b \in \R}  \Big(\int_\Omega |u(x) -b|^{\frac{\delta}{n-1-\ve}} d \Ha^{\delta}_\infty \Big)^{\frac{n-1-\ve }{\delta}}
\le c
\Big(\int_{\Omega} |\nabla u(x)|^{\frac{\delta}{n-\ve}} \, d \Ha^{\delta}_\infty \Big)^{\frac{n-\ve}{\delta}}
\end{equation*}
holds for all $u\in C^1(\Omega )$; the constant $c$ is independent of $u$.
\end{corollary}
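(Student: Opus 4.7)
The plan is to apply Theorem~\ref{thm:Poincare} directly, as the excerpt hints, with the specific parameter choice $p = \frac{\delta}{n-\ve}$ and $\kappa = 0$; the entire argument reduces to checking admissibility of this $p$ and then simplifying the resulting exponents. First I would verify that $p \in (\delta/n, \delta)$ whenever $\ve \in (0, n-1)$: the inequality $p > \delta/n$ is equivalent to $n > n-\ve$, i.e.\ $\ve > 0$, and the inequality $p < \delta$ is equivalent to $n-\ve > 1$, i.e.\ $\ve < n-1$. Both hold by assumption, so Theorem~\ref{thm:Poincare} is applicable.

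Next I would substitute into Theorem~\ref{thm:Poincare} with $\kappa = 0$. On the right-hand side we get
\[
\Big(\int_\Omega |\nabla u(x)|^p \, d\Ha^\delta_\infty\Big)^{1/p} = \Big(\int_\Omega |\nabla u(x)|^{\frac{\delta}{n-\ve}} \, d\Ha^\delta_\infty\Big)^{\frac{n-\ve}{\delta}},
\]
which is exactly what is needed. On the left-hand side the Hausdorff content dimension is $\delta - \kappa p = \delta$, and the key algebraic identity to verify is
\[
\frac{p(\delta - \kappa p)}{\delta - p} = \frac{p\,\delta}{\delta - p} = \frac{\delta^2/(n-\ve)}{\delta\bigl(1 - 1/(n-\ve)\bigr)} = \frac{\delta}{n-1-\ve},
\]
and correspondingly the outer exponent becomes
\[
\frac{\delta - p}{p(\delta - \kappa p)} = \frac{n-1-\ve}{\delta}.
\]
These match the left-hand side of the corollary exactly.

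Finally, I would note that the constant $c$ in Theorem~\ref{thm:Poincare} depends on $n$, $\delta$, $\kappa$, $p$, $\alpha$ and $\beta$, and since $p$ is determined by $\delta$, $n$, and $\ve$, the resulting constant in the corollary depends only on these parameters (and the John constants), hence is independent of $u$. There is no genuine obstacle here: the corollary is a one-line specialization of Theorem~\ref{thm:Poincare}, and the only thing to be careful about is the algebraic simplification of the exponent $p\delta/(\delta - p)$ and confirmation that the admissible range of $\ve$ corresponds precisely to the admissible range of $p$ in the parent theorem.
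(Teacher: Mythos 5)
Your proof is correct and takes the same route the paper itself sketches: the paper introduces this corollary with the remark that it follows by choosing $p = \frac{\delta}{n-\ve}$ and $\kappa = 0$ in Theorem~\ref{thm:Poincare}. Your verification of the admissibility range $p \in (\delta/n, \delta)$ and the exponent simplifications $\frac{p\delta}{\delta - p} = \frac{\delta}{n-1-\ve}$ and $\frac{1}{p} = \frac{n-\ve}{\delta}$ is exactly what is needed and is carried out correctly.
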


Let $b\in\R$ be given.
If  $\ve\in (0, n-1)$ is  given, then H\"older's inequality with $\Big(\frac{n-1}{n-1-\ve}, \frac{n-1}{\ve}\Big)$
yields that
\begin{equation*}
\Big(\int_\Omega |u(x) -b|^{\frac{\delta}{n-1}} d \Ha^{\delta}_\infty \Big)^{\frac{n-1 }{\delta}}
\le  2
\Big(\int_\Omega |u(x) -b|^{\frac{\delta}{n-1-\ve}} d \Ha^{\delta}_\infty \Big)^{\frac{n-1-\ve }{\delta}}
\Big(\Ha^{\delta}_\infty (\Omega)\Big)^{\frac{\ve}{\delta}}\,,
\end{equation*}
Combining this observation to Corollary \ref{epsilon} yields the following corollary
for $0<\delta <n$.
The case $\delta=n$ follows from the classical case by the H\"older inequality.

\begin{corollary}\label{epsilon_righthandside}
Suppose that  $\Omega$ is a bounded  $(\alpha, \beta)$-John domain in $\Rn$, $n\geq 2$.
If $\delta \in (0, n]$
then  for any fixed $\ve \in (0, n-1)$ there exists a constant $c$ such that  the inequality
\begin{equation}\label{equ:epsilon_righthandside}
\inf_{b \in \R}  \Big(\int_\Omega |u(x) -b|^{\frac{\delta}{n-1}} d \Ha^{\delta}_\infty \Big)^{\frac{n-1}{\delta}}
\le c
\Big(\int_{\Omega} |\nabla u(x)|^{\frac{\delta}{n-\ve}} \, d \Ha^{\delta}_\infty \Big)^{\frac{n-\ve}{\delta}}
\end{equation}
holds for all $u\in C^1(\Omega )$; the constant $c$ is independent of $u$.
\end{corollary}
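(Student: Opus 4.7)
The plan is to reduce Corollary~\ref{epsilon_righthandside} to Corollary~\ref{epsilon} by means of Hölder's inequality for Choquet integrals, which absorbs the mismatch between the exponent $\tfrac{\delta}{n-1-\ve}$ appearing on the left-hand side of Corollary~\ref{epsilon} and the exponent $\tfrac{\delta}{n-1}$ required here. The short computation displayed just above the statement of Corollary~\ref{epsilon_righthandside} already carries out this reduction, so my task is to organize its ingredients carefully.

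First I would fix $\ve \in (0, n-1)$ and $b \in \R$ and treat the case $0 < \delta < n$. Applying Hölder's inequality for Choquet integrals with the conjugate pair $\bigl(\tfrac{n-1}{n-1-\ve},\tfrac{n-1}{\ve}\bigr)$ — which holds with a universal constant (here $2$) because $\Ha^{\delta}_\infty$ is countably subadditive rather than additive — to the product $|u(x)-b|^{\delta/(n-1)} \cdot 1$ yields
\[
\int_\Omega |u(x)-b|^{\frac{\delta}{n-1}} d\Ha^{\delta}_\infty \le 2 \Big(\int_\Omega |u(x)-b|^{\frac{\delta}{n-1-\ve}} d\Ha^{\delta}_\infty\Big)^{\frac{n-1-\ve}{n-1}} \Ha^{\delta}_\infty(\Omega)^{\frac{\ve}{n-1}}.
\]
Raising both sides to the power $(n-1)/\delta$ and taking the infimum over $b \in \R$ preserves the inequality, and the right-hand side then contains the quantity $\inf_{b \in \R}\bigl(\int_\Omega |u-b|^{\delta/(n-1-\ve)} d\Ha^{\delta}_\infty\bigr)^{(n-1-\ve)/\delta}$, to which Corollary~\ref{epsilon} applies and produces a bound by $c \bigl(\int_\Omega |\nabla u|^{\delta/(n-\ve)}d\Ha^{\delta}_\infty\bigr)^{(n-\ve)/\delta}$. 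The residual factor $\Ha^{\delta}_\infty(\Omega)^{\ve/\delta}$ is finite since $\Omega$ is bounded and is absorbed into the constant, which is permitted to depend on $\Ha^{\delta}_\infty(\Omega)$.

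For the endpoint case $\delta = n$, the equivalences (\ref{content_measure})--(\ref{basic}) identify $\int_\Omega \cdot \, d\Ha^n_\infty$ with the Lebesgue integral up to constants depending only on $n$. The claim thereby reduces to a classical Poincaré--Sobolev inequality on bounded John domains, itself obtained by combining the standard Poincaré--Sobolev inequality with exponents $n/(n-\ve)$ and $n/(n-1-\ve)$ and a classical Hölder step.

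The main point where care is needed is the Hölder step in the second paragraph: since $\Ha^{\delta}_\infty$ is only countably subadditive, no measure-theoretic derivation in the classical sense is available, and one must appeal to a Hölder-type inequality for Choquet integrals against Hausdorff content that holds with an explicit universal constant depending only on the exponents. This is by now standard in the Choquet-integral literature cited in Section~\ref{Section:Def} and accounts for the factor $2$ above; everything else in the argument is bookkeeping of exponents.
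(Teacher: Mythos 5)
Your proposal is correct and follows essentially the same route as the paper: apply Hölder's inequality for Choquet integrals with conjugate pair $\bigl(\tfrac{n-1}{n-1-\ve},\tfrac{n-1}{\ve}\bigr)$ to lower the exponent from $\tfrac{\delta}{n-1-\ve}$ to $\tfrac{\delta}{n-1}$, absorb $\Ha^{\delta}_\infty(\Omega)^{\ve/\delta}$ into the constant, invoke Corollary~\ref{epsilon}, and handle $\delta=n$ separately via the classical case. The exponent bookkeeping and the observation about the factor $2$ coming from subadditivity of $\Ha^{\delta}_\infty$ are both accurate.
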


Note that if $p=\delta /n$ then $\delta p /(\delta -p) = \delta/(n-1)$.
Even if it were possible to apply some convergence result to inequality \eqref{equ:epsilon_righthandside}, such as a dominated convergence theorem
from \cite[Proposition 2.4]{PonceSpector23} that  could imply
\begin{equation*}
\int_{\Omega} |\nabla u(x)|^{\frac{\delta}{n-\ve}} \, d \Ha^{\delta}_\infty  \to \int_{\Omega} |\nabla u(x)|^{\frac{\delta}{n}} \, d \Ha^{\delta}_\infty
\end{equation*}
as $\ve\to 0^+$,
it seems that the constant 
$c$  in \eqref{equ:epsilon_righthandside} blows up  when $\ve\to 0^+$.

We show the following inequality where the dimension of the Hausdorff content is smaller on the right-hand side than on the left-hand side.

\begin{theorem}\label{cor:limit-case-1}
Suppose that  $\Omega$ is a bounded  $(\alpha, \beta)$-John domain in $\Rn$, $n\geq 2$.
Let  $\delta \in (0, n]$   and $\ve \in(0, \delta(1-\frac1n))$. Then
the inequality
\begin{equation*}
\inf_{b \in \R}  \Big(\int_\Omega |u(x) -b|^{\frac{\delta}{n-1-\ve n/\delta}} d \Ha^{\delta}_\infty \Big)^
{\frac{\delta(n-1)-\ve n}{\delta^2 }} \le c \Big(\int_{\Omega} |\nabla u(x)|^{\frac{\delta}{n}} \, d \Ha^{\delta-\ve}_\infty \Big)^{\frac{n}{\delta}}
\end{equation*}
holds for some  positive constant $c=c(\alpha, \beta, \delta, \epsilon, n) <\infty$ 
for all $u \in C^1(\Omega)$.

\end{theorem}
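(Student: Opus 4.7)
The plan is to derive Theorem \ref{cor:limit-case-1} by combining Theorem \ref{thm:Poincare} applied with a \emph{shifted} Hausdorff dimension $\delta-\ve$ together with Proposition \ref{GeneralizationOV}, which is precisely the tool that raises the Hausdorff dimension on the left–hand side of a Choquet integral inequality from $\delta-\ve$ up to $\delta$ at the cost of a change in the integration exponent. The admissibility condition $\ve\in(0,\delta(1-1/n))$ is exactly what makes $p=\delta/n$ lie strictly between $(\delta-\ve)/n$ and $\delta-\ve$, so that Theorem \ref{thm:Poincare} is applicable with dimension parameter $\delta-\ve$.

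\medskip

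\textbf{Step 1.} Apply Theorem \ref{thm:Poincare} to $u$ with dimension parameter $\delta'=\delta-\ve$, exponent $p=\delta/n$, and $\kappa=0$. The assumption $p\in(\delta'/n,\delta')$ becomes $\delta/n\in\bigl((\delta-\ve)/n,\,\delta-\ve\bigr)$, which is equivalent to $\ve\in(0,\delta(1-1/n))$. Setting
\[
q_1:=\frac{p\,\delta'}{\delta'-p}=\frac{\delta(\delta-\ve)}{\delta(n-1)-\ve n},
\]
Theorem \ref{thm:Poincare} yields, for some constant $c_1=c_1(\alpha,\beta,\delta,\ve,n)$,
\[
\inf_{b\in\R}\Big(\int_\Omega |u(x)-b|^{q_1}\, d\Ha^{\delta-\ve}_\infty\Big)^{\!1/q_1}
\le c_1\Big(\int_\Omega |\nabla u(x)|^{\delta/n}\, d\Ha^{\delta-\ve}_\infty\Big)^{\!n/\delta}.
\]

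\medskip

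\textbf{Step 2.} Introduce
\[
q_2:=\frac{\delta}{n-1-\ve n/\delta}=\frac{\delta^{2}}{\delta(n-1)-\ve n},
\]
so that $q_2\cdot(\delta-\ve)/\delta=q_1$. For each $b\in\R$ apply Proposition \ref{GeneralizationOV} to the function $f=|u-\cdot\,b|^{q_2}$ with $\delta_1=\delta-\ve$ and $\delta_2=\delta$, obtaining
\[
\Big(\int_\Omega |u(x)-b|^{q_2}\, d\Ha^{\delta}_\infty\Big)^{\!1/\delta}
\le\Big(\frac{\delta}{\delta-\ve}\Big)^{\!1/\delta}\Big(\int_\Omega |u(x)-b|^{q_1}\, d\Ha^{\delta-\ve}_\infty\Big)^{\!1/(\delta-\ve)}.
\]

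\medskip

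\textbf{Step 3.} Raise the inequality from Step 2 to the $\delta$-th power, take the infimum over $b\in\R$ on both sides (this is legitimate because $t\mapsto t^{\delta/(\delta-\ve)}$ is increasing), and plug in the estimate from Step 1. A short arithmetic check gives the key identity
\[
\frac{n\,q_1}{\delta-\ve}=\frac{n\delta}{\delta(n-1)-\ve n}=\frac{n\,q_2}{\delta},
\]
so that after taking the $1/q_2$-th root, the gradient integral on the right picks up exactly the exponent $n/\delta$ demanded by the statement, while the outer exponent on the left becomes $1/q_2=(\delta(n-1)-\ve n)/\delta^{2}$, as required. The resulting constant $c$ depends only on $\alpha,\beta,\delta,\ve,n$ through $c_1$ and the factor $(\delta/(\delta-\ve))^{1/\delta}$.

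\medskip

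The proof is really a two-line composition of Theorem \ref{thm:Poincare} and Proposition \ref{GeneralizationOV}; the only non-routine point is noticing that the target exponent $\delta/(n-1-\ve n/\delta)$ is precisely $q_1\cdot\delta/(\delta-\ve)$, which is exactly the conversion factor produced by Proposition \ref{GeneralizationOV} when changing the Hausdorff dimension from $\delta-\ve$ to $\delta$. Once this algebraic matching is spotted, no analytic difficulty remains.
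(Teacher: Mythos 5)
Your proof is correct and is essentially the paper's proof: both establish the result as a two-step composition of Proposition \ref{GeneralizationOV} (to shift the Hausdorff dimension from $\delta-\ve$ to $\delta$) with Theorem \ref{thm:Poincare} applied at dimension $\delta-\ve$ with $p=\delta/n$ and $\kappa=0$, using the hypothesis $\ve<\delta(1-\frac{1}{n})$ precisely to ensure $(\delta-\ve)/n<\delta/n<\delta-\ve$. The only cosmetic difference is that you apply the Poincar\'e step first and the dimension-shift second, whereas the paper applies them in the opposite order; the algebra matching $q_2(\delta-\ve)/\delta=q_1$ is identical.
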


\begin{proof}
Let  $\delta\in (0,n]$   and $\ve \in(0, \delta(1-\tfrac1n))$ be fixed.  By Proposition~\ref{GeneralizationOV} with
$\delta_1=\delta-\ve < \delta =\delta_2$
we obtain
\begin{equation*}
\Big(\int_\Omega |u(x) -b|^{\frac{\delta^2 }{\delta(n-1)-\ve n}} d \Ha^{\delta}_\infty \Big)^{\frac{\delta(n-1)-\ve n}{\delta ^2} }
\le  c
  \Big(\int_\Omega |u(x) -b|^{\frac{{\delta}/{n}(\delta- \ve)}{\delta-\ve - {\delta}/{n}}} d \Ha^{\delta -\ve}_\infty \Big)^{\frac{\delta-\ve - {\delta}/{n}}{{\delta}/{n}(\delta- \ve)}},
 \end{equation*}
 where $c$ depends only on $n$ and $\delta$.
 Then we use Theorem~\ref{thm:Poincare} with $\kappa =0$ for the $\delta -\epsilon$-dimensional Hausdorff dimension.
 The assumption $\ve < \delta(1-\frac1n)$ implies that
$\frac{\delta -\ve}{n} < \frac{\delta}{n} < \delta -\ve$.
Thus by Theorem~\ref{thm:Poincare}  
 \begin{equation*}
  \inf_{b \in \R}  \Big(\int_\Omega |u(x) -b|^{\frac{{\delta}/{n}(\delta- \ve)}{\delta-\ve - {\delta}/{n}}} d \Ha^{\delta -\ve}_\infty \Big)^{\frac{\delta-\ve - {\delta}/{n}}{{\delta}/{n}(\delta- \ve)}}
 \le c_2(\alpha, \beta, \delta, \ve, n) \Big(\int_{\Omega} |\nabla u(x)|^{\frac{\delta}{n}} \, d \Ha^{\delta-\ve}_\infty \Big)^{\frac{n}{\delta}}.
\end{equation*}
Combining these estimates yields the claim.
\end{proof}

The previous theorem implies an interesting result, Theorem~\ref{cor:SP-2d-epsilon}.

\begin{proof}[Proof of Theorem~\ref{cor:SP-2d-epsilon}]
Let $\delta \in (0,n]$  be given and then take $\ve \in(0, \delta(1-\frac1n))$.
Now $\frac{\delta (n-1)}{\delta (n-1)-\ve n} > 1$.
Let $b\in\R$ be fixed.
By the Hölder inequality  with 
$\big(\frac{\delta (n-1)}{\epsilon n}\,,\frac{\delta (n-1)}{\delta (n-1)-\epsilon n}\big)$
and the previous theorem we obtain
\[
\begin{split}
& \Big(\int_\Omega |u(x) -b|^{\frac{\delta}{n-1}} d \Ha^{\delta}_\infty \Big)^{\frac{n-1}{\delta}}\\
&\quad \le \bigg(2 \Ha^{\delta}_\infty(\Omega)^{\frac{\ve n}{\delta (n-1)}} \Big(\int_\Omega |u(x) -b|^{\frac{\delta}{n-1} \frac{\delta(n-1)}{\delta(n-1)-\ve n}} d \Ha^{\delta}_\infty \Big)^{\frac{\delta(n-1)-\ve n}{\delta(n-1)}} \bigg)^{\frac{n-1}{\delta}}\\
&\quad \le c_1  \Big( \int_\Omega |u(x) -b|^{\frac{\delta}{n-1-\ve n/\delta}} d \Ha^{\delta}_\infty  \Big)^{\frac{n-1-\ve n/\delta}{\delta}}\\
&\quad \le c_2 \Big(\int_{\Omega} |\nabla u(x)|^{\frac{\delta}{n}} \, d \Ha^{\delta- \ve}_\infty \Big)^{{\frac{n}{\delta}}}
\end{split}
\]
for all $\ve \in(0, \delta(1-\frac1n))$ and all $u \in C^1(\Omega)$.
Here the  positive constants $c_1=c_1(\delta, \ve, n, \Ha^{\delta}_\infty(\Omega) )$ and
$c_2=c_2(\alpha, \beta, \delta, \ve, n, \Ha^{\delta}_\infty(\Omega) )$ are finite.
Note that  the assumption $\ve \in(0, \delta(1-\frac1n))$ means that $\frac{\ve n}{\delta (n-1)}<1$, and hence  in fact $\Ha^{\delta}_\infty(\Omega)^{\frac{\ve n}{\delta (n-1)}} \le \max\{1, \Ha^{\delta}_\infty(\Omega)\}$.
\end{proof}

Without using Theorem \ref{cor:limit-case-1} we have the following result
 which differs slightly from the inequality in Theorem~\ref{cor:SP-2d-epsilon}.                  
Here the relation of the  Hausdorff dimension and the exponent of the function is more flexible.

\begin{theorem}\label{deltas}
Suppose that  $\Omega$ is a bounded  $(\alpha, \beta)$-John domain in $\Rn$, $n\geq 2$.
If $0<\delta \le n-1$ and $0<\delta_2<n\delta/(n-1)$ or $n-1\le \delta \le n$ and $0<\delta_2\le n$,
then the inequality
\begin{equation*}
\inf_{b \in \R}  \Big(\int_\Omega |u(x) -b|^{\frac{\delta}{n-1}} d \Ha^{\delta_2}_\infty \Big)^{\frac{n-1}{\delta}}
\le c
\Big(\int_{\Omega} |\nabla u(x)|^{\frac{\delta}{n}} \, d \Ha^{\delta_2(1-\frac{1}{n})}_\infty \Big)^{\frac{n}{\delta}}
\end{equation*}
holds for all $u\in C^1(\Omega )$.
\end{theorem}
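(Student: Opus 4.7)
The plan is to combine Proposition~\ref{GeneralizationOV}, Theorem~\ref{thm:Poincare}, and Hölder's inequality for Choquet integrals with respect to Hausdorff content, in that order.

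First, I would apply Proposition~\ref{GeneralizationOV} with its lower dimension taken to be $\delta_2(n-1)/n$ (which is strictly less than $\delta_2$ since $n\ge 2$) and with test function $f=|u-b|^{\delta/(n-1)}$ for each fixed $b\in\R$. The ratio $(n-1)/n$ between the two Hausdorff dimensions is exactly what is needed to convert the exponent $\delta/(n-1)$ into $(\delta/(n-1))\cdot(n-1)/n=\delta/n$. Raising both sides to the power $\delta_2(n-1)/\delta$ produces the intermediate estimate
\[
\Big(\int_\Omega|u-b|^{\delta/(n-1)}\,d\Ha^{\delta_2}_\infty\Big)^{(n-1)/\delta}\le c\Big(\int_\Omega|u-b|^{\delta/n}\,d\Ha^{\delta_2(n-1)/n}_\infty\Big)^{n/\delta},
\]
which already delivers the target Hausdorff dimension $\delta_2(n-1)/n$ on the right-hand side.

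Second, it remains to establish the same-exponent Poincaré inequality
\[
\inf_{b\in\R}\Big(\int_\Omega|u-b|^{\delta/n}\,d\Ha^{\delta_2(n-1)/n}_\infty\Big)^{n/\delta}\le c\Big(\int_\Omega|\nabla u|^{\delta/n}\,d\Ha^{\delta_2(n-1)/n}_\infty\Big)^{n/\delta}.
\]
I would deduce it from Theorem~\ref{thm:Poincare} applied with Hausdorff dimension $\delta'=\delta_2(n-1)/n$, with $\kappa=0$, and with an auxiliary gradient exponent $p^*$ chosen in the interval $(\delta'/n,\min(\delta',\delta/n))$. The hypothesis $\delta_2<n\delta/(n-1)$ in Case~A, and $\delta\ge n-1$, $\delta_2\le n$ in Case~B, both translate to $\delta'<\delta$, which keeps this interval non-empty. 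The corresponding Sobolev exponent $q^*=p^*\delta'/(\delta'-p^*)$ can be arranged to satisfy $q^*\ge\delta/n$. Two applications of Hölder's inequality for Choquet integrals—on the left to lower the exponent from $q^*$ down to $\delta/n$, on the right to raise it from $p^*$ up to $\delta/n$—convert the Sobolev–Poincaré estimate provided by Theorem~\ref{thm:Poincare} into the same-exponent Poincaré inequality above, at the cost of an extra factor that is a power of $\Ha^{\delta_2(n-1)/n}_\infty(\Omega)$ and is absorbed into $c$.

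Concatenating the two displayed inequalities and taking the infimum over $b\in\R$ then yields the theorem. The main obstacle is the treatment of the borderline combinations of the parameters, for instance $\delta=n-1$ and $\delta_2=n$ in Case~B or $\delta_2$ approaching $n\delta/(n-1)$ in Case~A, where $\delta'$ tends to $\delta$ and the admissible interval for $p^*$ degenerates to a point; there one must either pass to a limit $p^*\uparrow\delta'$ using a convergence statement for Choquet integrals (cf.\ \cite{PonceSpector23}) or replace Theorem~\ref{thm:Poincare} in the argument by the Trudinger-type inequality Theorem~\ref{thm:main-p=n}, after which the rest of the reasoning goes through verbatim.
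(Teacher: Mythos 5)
Your proposal is essentially correct and follows the same overall strategy as the paper's proof: you apply Proposition~\ref{GeneralizationOV} with lower dimension $\delta_2(1-\tfrac1n)$ to trade $\Ha^{\delta_2}_\infty$ for $\Ha^{\delta_2(1-1/n)}_\infty$ while dropping the exponent from $\delta/(n-1)$ to $\delta/n$, and then invoke a same-exponent Poincar\'e inequality with Hausdorff parameter $\delta_2(1-\tfrac1n)$ and exponent $p=\delta/n$ on a John domain. This is exactly the paper's choice of auxiliary dimension $\delta_1 = \delta_2(1-\tfrac1n)$.

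The one place you diverge is in how you obtain that same-exponent Poincar\'e inequality. The paper cites it directly as \cite[Theorem~3.2]{HH-S_JFA} (valid for all $p>\delta_1/n$; the hypotheses guarantee precisely $\delta/n>\delta_2(n-1)/n^2$). You, not having that reference available, rederive it from the Sobolev--Poincar\'e inequality of Theorem~\ref{thm:Poincare} by inserting an auxiliary exponent $p^*\in(\delta'/n,\min(\delta',\delta/n))$ with $\delta'=\delta_2(n-1)/n$ and applying H\"older on both sides. This does work: one can pick $p^*$ with $\delta\delta'/(n\delta'+\delta)\le p^*\le \delta/n$ so that $q^*=p^*\delta'/(\delta'-p^*)\ge \delta/n$, and this interval is nonempty exactly when $\delta'<\delta$. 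The cost of the detour is that the two H\"older steps leave a net factor of $\Ha^{\delta'}_\infty(\Omega)^{1/\delta'}$, so your constant picks up a dependence on $\Ha^{\delta_2(1-1/n)}_\infty(\Omega)$ that the paper's constant (which depends only on $\alpha,\beta,\delta,\delta_2,n$) does not. Your remark about the borderline $\delta'=\delta$ (e.g.\ $\delta=n-1$, $\delta_2=n$) is fair — the paper's own argument also needs $\delta>\delta_2(1-\tfrac1n)$ strictly — but the limit argument or Trudinger substitute you sketch is not needed once \cite[Theorem~3.2]{HH-S_JFA} is used directly, and Theorem~\ref{thm:main-p=n} is not a drop-in replacement since it yields exponential integrability rather than a homogeneous $L^{\delta/n}$-type estimate.
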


\begin{proof}
Let $b\in R$ and $\delta \in (0,n]$ be given.
Let us take two variables $\delta_1$ and $\delta_2$ which we fix later.
Whenever $ 0<\delta_1 <\delta_2\le n$, then by Proposition \ref{GeneralizationOV} 
\begin{equation*}
  \Big(\int_\Omega |u(x) -b|^{\frac{\delta}{n-1}} d \Ha^{\delta_2}_\infty \Big)^{\frac{n-1}{\delta}}
\le  \biggl(\frac{\delta_2}{\delta_1}\biggr)^{\frac{n-1}{\delta}}
  \Big(\int_\Omega |u(x) -b|^{\frac{\delta\delta_1}{(n-1)\delta_2}} d \Ha^{\delta_1}_\infty \Big)^{\frac{\delta_2(n-1)}{\delta\delta_1}}.
\end{equation*}
Whenever 
\begin{equation*}
\frac{\delta\delta_1}{(n-1)\delta_2}>\frac{\delta_1}{n}, \mbox{ that is }
\delta >(1-\frac{1}{n})\delta_2\,,
\end{equation*}
the Poincar\'e inequality \cite[Theorem 3.2]{HH-S_JFA}
yields that
\begin{equation*}
 \inf_{b \in \R}  \Big(\int_\Omega |u(x) -b|^{\frac{\delta\delta_1}{(n-1)\delta_2}} d \Ha^{\delta_1}_\infty \Big)^{\frac{\delta_2(n-1)}{\delta\delta_1}}
 \le c_1 \Big(\int_{\Omega} |\nabla u(x)|^{\frac{\delta\delta_1}{(n-1)\delta_2}} \, d \Ha^{\delta_1}_\infty \Big)^{\frac{(n-1)\delta_2}{\delta\delta_1}}
\end{equation*}
with some constant $c_1=c_1(\alpha, \beta, \delta, \delta_1, \delta_2, n)< \infty$ independent of $u$.
Whenever
\begin{equation*}
\frac{\delta\delta_1}{(n-1)\delta_2}=\frac{\delta}{n}, \mbox{ that is }
\delta_1 =(1-\frac{1}{n})\delta_2\,,
\end{equation*}
then  the equality
\begin{equation*}
\Big(\int_{\Omega} |\nabla u(x)|^{\frac{\delta\delta_1}{(n-1)\delta_2}} \, d \Ha^{\delta_1}_\infty \Big)^{\frac{(n-1)\delta_2}{\delta\delta_1}}
=
\Big(\int_{\Omega} |\nabla u(x)|^{\frac{\delta}{n}} \, d \Ha^{\delta_1}_\infty \Big)^{\frac{n}{\delta}}
\end{equation*}
is valid for $u\in C^1(\Omega)$.
Hence  the inequality 
\begin{equation*}
\inf_{b \in \R}  \Big(\int_\Omega |u(x) -b|^{\frac{\delta}{n-1}} d \Ha^{\delta_2}_\infty \Big)^{\frac{n-1}{\delta}}
\le c_2
\Big(\int_{\Omega} |\nabla u(x)|^{\frac{\delta}{n}} \, d \Ha^{\delta_1}_\infty \Big)^{\frac{n}{\delta}}
\end{equation*}
follows whenever
\[\delta_1=\delta_2(1-\frac{1}{n}) \quad
\text{and} \quad \delta >\delta_2(1-\frac{1}{n}).
\]
Thus,
$\delta_2<n\delta /(n-1)$, and here $n\delta/(n-1)$ could be larger than $n$ or less than or equal to  $n$. This causes
two parts for the assumption.
Here $c_2=c_2(\alpha, \beta, \delta, \delta_1, \delta_2, n)< \infty$.
\end{proof}

If we require  from  functions only $\frac{\delta}{n}$-integrability 
of the absolute value of the gradient with respect to the \ $\Ha_\infty^{\delta}$-content, then we have the following weak-type Poincar\'e-Sobolev inequality.

\begin{theorem}\label{weaklimit_kappa}
Suppose that  $\Omega$ is a bounded  $(\alpha, \beta)$-John domain in $\Rn$.
Let  $\delta \in (0, n]$  and $\kappa \in [0,1)$ be given. 
If $u \in C^1(\Omega)$, then for every $t>0$
\begin{equation*}
\inf_{b \in \R} \, \Ha_\infty^{\delta\frac{n-\kappa}{n}}\biggl(\{x\in\Omega :\vert u(x)-b\vert >t\}\biggr)^{\frac{n-1}{\delta}}
\le \frac{c}{t} \Big(\int_{\Omega}\vert \nabla u(x)\vert^{\frac{\delta}{n}}\,d\Ha_\infty^{\delta} \Big)^{\frac{n}{\delta}}\,,
\end{equation*}
where
$c$ is a constant which depends only on 
$n$, $\delta$, $\kappa$,
and John constants $\alpha$ and $\beta$.  
\end{theorem}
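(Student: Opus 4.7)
The plan is to follow the standard route to endpoint weak-type Sobolev estimates: derive a pointwise bound of $|u(x)-b|$ by the Riesz potential of $|\nabla u|$ through a John-chain argument, and then establish a weak-type Adams inequality for $I_1$ in the Choquet / Hausdorff content setting.

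More precisely, I would fix a John centre $x_0\in\Omega$ and a ball $B_0\subset\Omega$ around $x_0$, take $b=\vint_{B_0}u\,dx$, and apply the chain-of-balls estimate underlying the proof of Theorem~\ref{thm:Poincare} in \cite{HH-S_JFA} to obtain
\[
|u(x)-b|\le c_1(\alpha,\beta,n)\,I_1(|\nabla u|\chi_\Omega)(x),\qquad I_1 g(x):=\int \frac{g(y)}{|x-y|^{n-1}}\,dy.
\]
It then suffices to prove the weak-type inequality
\[
\Ha_\infty^{\delta(n-\kappa)/n}\bigl(\{I_1 g>t\}\bigr)^{(n-1)/\delta}\le \frac{c_2}{t}\Bigl(\int_\Omega g^{\delta/n}\,d\Ha_\infty^\delta\Bigr)^{n/\delta}
\]
for $g\ge 0$ supported in $\Omega$, apply it with $g=|\nabla u|$, and take the infimum over $b\in\R$. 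The weak bound for $I_1$ I would obtain from the Hedberg splitting
\[
I_1 g(x)\le c\,r\,\M g(x)+c\int_{|x-y|>r}\frac{g(y)}{|x-y|^{n-1}}\,dy,
\]
optimizing $r>0$ pointwise. The local term is controlled by the weak-$L^{\delta/n}(\Ha_\infty^\delta)$ boundedness of the maximal operator recalled in Section~\ref{Section:Def} (Orobitg--Verdera), while the tail is dominated via a dyadic annular decomposition so as to produce a factor $(\int g^{\delta/n}\,d\Ha_\infty^\delta)^{n/\delta}$. Optimizing in $r$ and raising the resulting estimate to the power $(n-1)/\delta$ yields the desired weak bound.

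The main obstacle is the endpoint weak-type estimate for $I_1$. At the exponent $p=\delta/n$, which is below $1$ whenever $\delta<n$, Hedberg's interpolation has to be rerun inside the Choquet framework, and the non-additivity of $\Ha_\infty^\delta$ forces a careful dyadic Hardy-type bookkeeping for the tail term. A further delicate point is that, when $\kappa>0$, the target Hausdorff content has dimension $\delta(n-\kappa)/n$ strictly less than $\delta$: the Vitali-type covering of $\{\M g>s\}$ must be carried out at this lower dimension, and it is precisely the strict inequality $\kappa<1$ that keeps the resulting geometric sum in $r_i^{\delta(n-\kappa)/n}$ summable, so that the above weak bound for $I_1$ (and hence the theorem) retains the desired $1/t$ scaling on the right-hand side.
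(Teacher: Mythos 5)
Your overall strategy is on the right track---pointwise bound by $I_1|\nabla u|$ via the John-chain, a Hedberg-type splitting, and then a weak-type capacitary estimate for a maximal operator---but the choice of maximal operator is where the argument breaks down for $\kappa>0$. You propose the splitting
\[
I_1 g(x)\le c\,r\,\M g(x)+c\int_{|x-y|>r}\frac{g(y)}{|x-y|^{n-1}}\,dy,
\]
with the \emph{non-fractional} Hardy--Littlewood maximal operator $\M$, and then plan to invoke the Orobitg--Verdera weak-type estimate, which lives at the content dimension $\delta$. That machinery produces a bound on $\Ha_\infty^{\delta}(\{I_1 g>t\})$, not on $\Ha_\infty^{\delta(n-\kappa)/n}(\{I_1 g>t\})$. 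The monotonicity of content in the dimension (as in Proposition~\ref{GeneralizationOV}) only gives inequalities in the wrong direction, so you cannot subsequently ``downgrade'' the content on the left-hand side from $\delta$ to $\delta(n-\kappa)/n$; your heuristic of re-running the Vitali covering of $\{\M g>s\}$ at the lower dimension does not change this, because without the factor $r^{\kappa}$ built into the maximal operator the covering balls' radii do not carry the extra gain needed to close the geometric sum at exponent $\delta(n-\kappa)/n$ with the correct $1/t$ scaling.

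The paper instead runs the Hedberg interpolation against the \emph{fractional} maximal operator $M_\kappa$: from the pointwise estimate $|u(x)-u_B|\le c\,I_1|\nabla u|(x)$ and a ready-made Hedberg lemma (Lemma~2 of \cite{HH-S_Proc2022}), one gets
\[
|u(x)-u_B|\le c\,\bigl(M_\kappa|\nabla u|(x)\bigr)^{\frac{n-1}{n-\kappa}} N^{1-\frac{n-1}{n-\kappa}},
\qquad N=\Bigl(\int_\Omega|\nabla u|^{\delta/n}\,d\Ha_\infty^{\delta}\Bigr)^{n/\delta},
\]
and then applies Adams' weak-type estimate for $M_\kappa$ (Theorem~7(ii) of \cite{Adams98}), which is exactly the tool that maps data measured in $\Ha_\infty^{\delta}$ to a superlevel-set bound in $\Ha_\infty^{\delta(n-\kappa)/n}$. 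This is where the constraint $\kappa<1$ actually enters: it is the hypothesis of the Hedberg lemma, not a condition on a covering sum. So your proposal, as written, would only give the $\kappa=0$ case (Corollary~\ref{weaklimit}); to cover all $\kappa\in[0,1)$ you need to replace $\M$ by $M_\kappa$ and replace Orobitg--Verdera by Adams' fractional weak-type bound.
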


\begin{proof}
Since $\Omega$ is an  $(\alpha, \beta)$-John domain, there exists
a pointwise estimate
\begin{equation}\label{pointwise}
|u(x) -u_B| \le c(n, \alpha, \beta) \int_\Omega \frac{|\nabla u(y)|}{|x-y|^{n-1}}\,dy =  c(n, \alpha, \beta )I_1 |\nabla u| (x)
\end{equation}
for every $x \in \Omega$.   Here, $B=B(x_0, c(n)\alpha^2/\beta)$ and
$c(n,\alpha ,\beta)=c(n)(\beta /\alpha)^{2n}$.
We refer to the history of  (\ref{pointwise}) in  the proof of \cite[Theorem 3.2]{HH-S_JFA}.

We use \cite[Lemma 2]{HH-S_Proc2022}  with $s=1$ to estimate the Riesz potential by the fractional maximal operator, and obtain 
\begin{equation}\label{pointwisemax}
|u(x) -u_B| \le c (M_\kappa |\nabla u|(x))^{\frac{n-1}{n- \kappa}} N^{1-\frac{n-1}{n-\kappa}},
\end{equation}
where $N:= \Big(\int_{\Rn\setminus B(x, r)} |\nabla u(y)|^{\frac{\delta}{n}} \, d \Ha^{\delta}_\infty \Big)^{\frac{n}{\delta}}$.
Here  the assumption $\kappa \in[0, 1)$ comes in.
Combining estimates \eqref{pointwise} and \eqref{pointwisemax} yields that
\[
\begin{split}
&\Ha_\infty^{\delta\frac{n-\kappa}{n}}\Big(\big\{x\in\Omega :\vert u(x)-u_B\vert >t \big\}\Big)\\
&\quad\le  
\Ha_\infty^{\delta\frac{n-\kappa}{n}}
\biggl(\Big\{x\in\Omega :c (M_{\kappa}|\nabla u|(x))^{\frac{n-1}{n-\kappa}} N^{\frac{n-\kappa +1-n}{n-\kappa}} >t \Big\}\biggr)\\
&\quad=  
\Ha_\infty^{\delta\frac{n-\kappa}{n}}
\biggl(\Big\{x\in\Omega :  M_{\kappa} \big(c N^{\frac{1-\kappa}{n-1}}|\nabla u|\big)(x) >t^{\frac{n-\kappa}{n-1}}\Big\}\biggr)\,.
\end{split}
\]
Using the weak type estimate for the fractional maximal operator proved by Adams \cite[Theorem 7 (ii)]{Adams98} implies that
\[
\begin{split}
&\Ha_\infty^{\delta\frac{n-\kappa}{n}}
\Big(\big\{x\in\Omega :\vert u(x)-u_B\vert >t \big\}\Big)\\
&\quad\le c(t^{\frac{n-\kappa}{n-1}})^{-\frac{\delta}{n-\kappa}} \bigg(\int_{\Omega}\big(c N^{\frac{1-\kappa}{n-1}}\vert \nabla u(x)\vert \big)^{\frac{\delta}n}\,d\Ha_\infty^{\delta} \bigg)^{\frac{n}{\delta}\frac{\delta}{n-\kappa}}\\
&\quad\le c 
t^{-\frac{\delta}{n-1}}
 \Big(\int_{\Omega}\vert \nabla u(x)\vert^{\frac{\delta}n}\,d\Ha_\infty^{\delta} \Big)^{\frac{n}{n-1}}\,.
\end{split}
\]
This gives the claim.
\end{proof}

The previous theorem yields the following corollary when we choose $\kappa =0$.

\begin{corollary}\label{weaklimit}
Suppose that  $\Omega$ is a bounded  $(\alpha, \beta)$-John domain in $\Rn$ and
$\delta \in (0, n]$  is given. If
$u \in C^1(\Omega)$, then for every $t>0$
\begin{equation*}
\inf_{b \in \R} \, \Ha_\infty^{\delta}\biggl(\{x\in\Omega :\vert u(x)-b\vert >t\}\biggr)^{\frac{n-1}{\delta}}
\le \frac{c}{t} \Big(\int_{\Omega}\vert \nabla u(x)\vert^{\frac{\delta}{n}}\,d\Ha_\infty^{\delta} \Big)^{\frac{n}{\delta}}\,,
\end{equation*}
where $c$ is a constant which depends only on $n$, $\delta$, 
and John constants $\alpha$ and $\beta$.
\end{corollary}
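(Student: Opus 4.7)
The plan is simply to invoke Theorem~\ref{weaklimit_kappa} with the parameter choice $\kappa = 0$, which lies in the admissible range $[0,1)$. First I would verify that the statement of Theorem~\ref{weaklimit_kappa} reduces, under this specialization, to the claim of Corollary~\ref{weaklimit}: on the left-hand side the exponent of the Hausdorff content becomes $\delta \cdot \tfrac{n-0}{n} = \delta$, so $\Ha_\infty^{\delta(n-\kappa)/n}$ collapses to $\Ha_\infty^{\delta}$, while the outer exponent $\tfrac{n-1}{\delta}$ and the right-hand side are manifestly independent of $\kappa$.

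Second, I would check that the constant dependence advertised by the corollary is consistent with the one delivered by the theorem: Theorem~\ref{weaklimit_kappa} allows the constant to depend on $n$, $\delta$, $\kappa$, $\alpha$, and $\beta$, and upon fixing $\kappa = 0$ the dependence on $\kappa$ disappears, leaving exactly the dependence claimed in Corollary~\ref{weaklimit}. No further analytical work is needed, since the pointwise Riesz potential estimate on John domains, the bound on $I_1$ by the non-fractional maximal operator, and the Adams weak-type estimate for $\M_0 = \M$ are all already absorbed into the proof of Theorem~\ref{weaklimit_kappa}.

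Since this reduction is purely bookkeeping, there is no real obstacle to overcome. The one point worth double-checking is that \cite[Lemma~2]{HH-S_Proc2022}, as invoked in the proof of Theorem~\ref{weaklimit_kappa}, remains valid at the endpoint $\kappa = 0$, so that the chain of inequalities goes through with the ordinary (non-fractional) maximal operator; and similarly that Adams' weak-type inequality is applied in its $\kappa = 0$ incarnation, which is just the standard Choquet weak-type bound for $\M$ with respect to $\Ha_\infty^{\delta}$. Having confirmed these endpoint cases, the corollary is an immediate consequence.
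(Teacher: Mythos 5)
Your proposal is correct and is exactly the paper's own argument: the paper states that Corollary~\ref{weaklimit} follows from Theorem~\ref{weaklimit_kappa} by choosing $\kappa = 0$, which is precisely the specialization you carry out. The bookkeeping you describe (the content exponent collapsing to $\delta$, the constant dependence, the endpoint validity of the maximal-function ingredients) is all as you say.
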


\begin{remark}
We point out that  the proofs of 
the theorems in this section, including  the proof for Theorem~\ref{cor:SP-2d-epsilon},  
give stronger inequalities than in the statements in these theorems, respectively. 
Namely,  in the proofs we estimate $\vert u(x)-u_B\vert $ where
$B:=B(x_0,k\dist (x_0,\partial \Omega ))$ 
and $u_B=\vert B\vert^{-1}\int_B u(x)\,dx$
with $k\in(0,1)$  being a constant and depending 
on $x_0$ and John constants of $\Omega$.
Hence, for example the inequality in  Theorem~\ref{cor:SP-2d-epsilon} can be replaced by
the inequality
\begin{equation*}
 \Big(\int_\Omega |u(x) -u_B|^{\frac{\delta}{n-1}} d \Ha^{\delta}_\infty \Big)^
{\frac{n-1}{\delta}} \le c \Big(\int_{\Omega} |\nabla u(x)|^{\frac{\delta}{n}} \, d \Ha^{\delta- \ve}_\infty \Big)^{\frac{n}{\delta}}\,,
\end{equation*}
holding for all $\ve \in(0, \delta(1-\frac1n))$ and
for all $u \in C^1(\Omega)$.  Here $c$ is a finite constant independent of $u$.
\end{remark}

\section{Inequalities  in  the limiting  case $p= \delta$}\label{Section:Trudinger}

The main result  in this section is the following theorem  which  improves 
\cite[Corollary 1.2, Theorem 5.9]{HH-S_AAG} 
whenever the set where the functions are defined is a  bounded John domain.

\begin{theorem}\label{thm:main-p=n}
If $\Omega$ is a bounded John domain in $\R^n$, $n\geq 2$, and  $\delta_1, \delta_2\in (0, n]$, 
then there exist positive constants $a$ and $b$  independent of functions 
$u \in C^{1}(\Omega)$
such that
 \begin{equation}\label{basicTr}
\int_{\Omega} \exp\big( a |u(x)-u_B|^{\frac{n}{n-1}} \big) \, d \Ha^{\delta_1}_\infty \le b
\end{equation}
 whenever $\int_{\Omega} |\nabla u(y)|^{\delta_2} \, d \Ha^{\delta_2}_\infty \le 1$.
 Here
 $B:= B(x_0, k \dist(x_0, \partial \Omega ))$ with $k\in (0,1)$  
 depending  on $\Omega$ and $x_0\in \Omega$ is  fixed.
\end{theorem}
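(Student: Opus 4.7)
The plan is to follow Moser's classical route to Trudinger-type inequalities, adapted to the Choquet setting: combine the John-domain pointwise Riesz-potential estimate with a series expansion of the exponential, then reduce matters to a sharp polynomial-growth bound for $I_1$ between mixed Choquet spaces.

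First I would invoke the pointwise estimate $|u(x)-u_B|\le C_0\, I_1|\nabla u|(x)$ on $\Omega$, available on every bounded $(\alpha,\beta)$-John domain and written out as inequality \eqref{pointwise} in the proof of Theorem~\ref{weaklimit_kappa}. By homogeneity I may assume the normalisation $\int_\Omega|\nabla u|^{\delta_2}\,d\Ha^{\delta_2}_\infty\le 1$. Since the partial sums of the Taylor expansion of $\exp(a\,|u-u_B|^{n/(n-1)})$ are all increasing functions of the single nonnegative function $|u-u_B|$, they are pairwise comonotone and the Choquet integral is additive on them; monotone convergence then gives
\[
\int_\Omega \exp\!\bigl(a|u-u_B|^{n/(n-1)}\bigr)\,d\Ha^{\delta_1}_\infty
=\sum_{k=0}^\infty\frac{a^k}{k!}\int_\Omega |u-u_B|^{kn/(n-1)}\,d\Ha^{\delta_1}_\infty.
\]
So it suffices to prove a moment bound of the form $\int_\Omega |u-u_B|^{q}\,d\Ha^{\delta_1}_\infty\le (Cq^{(n-1)/n})^{q}$ for $q=kn/(n-1)$, with $C$ independent of $u$ and $k$.

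The heart of the argument is the Adams-type estimate
\[
\Big(\int_\Omega (I_1 g)^q\,d\Ha^{\delta_1}_\infty\Big)^{1/q}
\le C_1\,q^{(n-1)/n}\,\Big(\int_\Omega g^{\delta_2}\,d\Ha^{\delta_2}_\infty\Big)^{1/\delta_2}
\]
applied with $g=|\nabla u|$ and $q\ge\delta_2$. To establish it I would split $I_1 g$ by a Hedberg truncation $I_1 g(x)\le c\bigl(r\,M g(x)+r^{1-n/\delta_2}\bigr)$ (using the unit-norm hypothesis), optimise in $r$ to obtain the pointwise bound $I_1 g(x)\le c\,(Mg(x))^{1-1/\delta_2}$, and then invoke a $q$-dependent Choquet-space bound $\|Mg\|_{L^q(\Ha^{\delta_1}_\infty)}\le c\,q^{\sigma}\,\|g\|_{L^{\delta_2}(\Ha^{\delta_2}_\infty)}$ coming from Adams' maximal-function inequalities \cite{Adams86,Adams98}, choosing $\sigma$ so that the composition produces precisely $q^{(n-1)/n}$. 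When $\delta_1>\delta_2$ an additional application of Proposition~\ref{GeneralizationOV} may be required to convert a $\Ha^{\delta_1}_\infty$-integral to a $\Ha^{\delta_2}_\infty$-integral before the $M$-bound is used.

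Substituting $q=kn/(n-1)$ in the moment bound then yields $\int_\Omega |u-u_B|^{kn/(n-1)}\,d\Ha^{\delta_1}_\infty\le C_2^{k}\,k^k$, and Stirling's inequality $k!\ge (k/e)^k$ dominates the sum by $\sum_{k\ge 0}(aC_2 e)^{k}$. Choosing $a<(eC_2)^{-1}$ makes this a convergent geometric series, proving \eqref{basicTr} with $b=\Ha^{\delta_1}_\infty(\Omega)+(1-aC_2 e)^{-1}$, which is finite since $\Omega$ is bounded. The main obstacle will be establishing the Adams-type estimate above with the precise polynomial growth $q^{(n-1)/n}$ in the mixed-dimension Choquet setting: porting Hedberg's lemma to Choquet norms and tracking constants through the interplay between the two different Hausdorff contents $\Ha^{\delta_1}_\infty$ and $\Ha^{\delta_2}_\infty$ is the delicate point, and it is precisely here that the present theorem should improve on \cite[Corollary~1.2, Theorem~5.9]{HH-S_AAG}.
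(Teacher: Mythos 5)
Your plan --- expand the exponential in a Taylor series, use comonotonic additivity of the Choquet integral to reduce matters to $L^q$ moment bounds with controlled $q$-dependence --- is a genuine Moser-style alternative to the paper's route, which instead feeds the Riesz-potential pointwise bound into Theorem~\ref{thm:Riesz-limit-case} to get $\Ha^{\delta_1}_\infty(\{I_1|\nabla u|>t\})\le c_1\exp(-c_2 t^{n/(n-1)})$ and then integrates the distribution function directly, splitting at $t=1$. The comonotonicity argument justifying the term-by-term Choquet integration is fine. But the load-bearing estimate in your sketch,
\[
\Big(\int_\Omega (I_1 g)^q\,d\Ha^{\delta_1}_\infty\Big)^{1/q}\le C\,q^{(n-1)/n}\Big(\int_\Omega g^{\delta_2}\,d\Ha^{\delta_2}_\infty\Big)^{1/\delta_2},
\]
is not something you can just ``invoke'' from \cite{Adams86,Adams98}; it is essentially equivalent in content to the exponential-decay bound the paper proves via Lemmas~\ref{lem:Hedberg} and \ref{lem:Haudorff-1}, and the hard work of tracking the $q$-dependence (or, in the paper's parametrisation, the $\ve$-dependence in the fractional maximal operator $M_{\eta-\ve}$) is precisely what has been omitted.

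There is also a concrete error in the intermediate step. The far part of the Hedberg truncation $r^{1-n/\delta_2}$ would require $\|g\|_{L^{\delta_2}(\Omega)}\le 1$, but the hypothesis is $\int_\Omega g^{\delta_2}\,d\Ha^{\delta_2}_\infty\le1$, and for $\delta_2<n$ the Choquet integral with respect to $\Ha^{\delta_2}_\infty$ is strictly stronger than, and not comparable to, the Lebesgue integral. What Proposition~\ref{GeneralizationOV} actually gives you from the hypothesis is a bound on $\|g\|_{L^n(\Omega)}$; plugging that into the far part yields a factor of $\bigl(\log(\diam\Omega/r)\bigr)^{(n-1)/n}$ rather than a negative power of $r$ --- this is exactly the critical-exponent obstruction that makes the Trudinger bound nontrivial, and it cannot be optimised away by choosing $r$. (Even granting $\|g\|_{L^{\delta_2}}\le1$, the optimised Hedberg exponent would be $1-\delta_2/n$, not $1-1/\delta_2$ as written.) So the proposal correctly identifies a different route in outline, but the part of the argument that carries the mathematical content --- producing the $q^{(n-1)/n}$ growth, or equivalently the exponential decay of the distribution function, in the mixed Choquet setting --- is missing, and the step offered in its place does not hold.
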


We 
provide a proof
for the convenience of the reader although it follows the same lines as the
proof for \cite[Theorem 5.9]{HH-S_AAG}  where  functions from the variable order Sobolev spaces defined  on more irregular domains
 were considered.
The proof of Theorem \ref{thm:main-p=n}  is based on the following result.

\begin{theorem}\label{thm:Riesz-limit-case}
Let $\Omega$ be an open and  bounded set in $\Rn$. Let
 $\delta_1, \delta_2 \in  (0, n]$ and $\eta\in (0, n)$.
Then, there exist constants $c_1$ and $c_2$  independent of the function $f$ such that the inequality
\begin{equation*}
\Ha_\infty^{\delta_1}(\{x \in \Omega: I_{\eta}f(x) >t\})
\le c_1\exp (-c_2t^{\frac{n}{n-\eta}})
\end{equation*}
holds
for all
$f \in L^{1}_{\loc}(\Omega)$ with $\int_{\Omega} |f(y)|^{ \frac{\delta_2}{\eta}} \, d \Ha^{\delta_2}_\infty \le 1$.
\end{theorem}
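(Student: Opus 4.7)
The plan is to mimic the classical Adams proof of the weak-type Moser--Trudinger inequality for the Riesz potential, transferred to the Choquet/Hausdorff-content setting along the lines of \cite[Theorem 5.9]{HH-S_AAG}. The first move is to extract a classical Lebesgue norm out of the Choquet hypothesis: applying Proposition~\ref{GeneralizationOV} with its $\delta_1,\delta_2$ replaced by $\delta_2,n$ and with the test function $g:=|f|^{n/\eta}$, together with the equivalence \eqref{basic} between the Lebesgue integral and the $\Ha^n_\infty$-Choquet integral, yields the classical critical bound
\[
\int_\Omega |f(y)|^{n/\eta}\,dy \le c\,\Bigl(\int_\Omega |f|^{\delta_2/\eta}\,d\Ha^{\delta_2}_\infty\Bigr)^{n/\delta_2} \le c(n,\delta_2,\eta).
\]

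Next I would prove a pointwise Hedberg-type estimate. Given $x\in\Omega$ and $r\in(0,\diam(\Omega))$, I split
\[
I_\eta f(x) = \int_{B(x,r)}\frac{|f(y)|}{|x-y|^{n-\eta}}\,dy + \int_{\Omega\setminus B(x,r)}\frac{|f(y)|}{|x-y|^{n-\eta}}\,dy.
\]
A dyadic decomposition of the local integral gives the standard bound $c\,r^\eta Mf(x)$ in terms of the Hardy--Littlewood maximal function, while H\"older's inequality with the critical conjugate pair $(n/\eta,n/(n-\eta))$ combined with the bound above handles the non-local part and produces
\[
\int_{\Omega\setminus B(x,r)}\frac{|f(y)|}{|x-y|^{n-\eta}}\,dy \le c\,\bigl(\log(\diam(\Omega)/r)\bigr)^{(n-\eta)/n},
\]
the logarithm being the signature of the critical exponent $p=n/\eta$. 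Choosing $r := \diam(\Omega)\exp(-C\,t^{n/(n-\eta)})$ so that the non-local contribution is at most $t/2$, the condition $I_\eta f(x) > t$ forces $Mf(x) > c_0\exp(c_2\,t^{n/(n-\eta)})$, after absorbing a polynomial factor in $t$ into the exponential for $t$ large. Thus
\[
\{x : I_\eta f(x) > t\} \subset \{x : Mf(x) > c_0\exp(c_2\,t^{n/(n-\eta)})\}.
\]

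To finish, I would apply the weak-type $(p,p)$ estimate for the maximal function in the Choquet setting from Adams \cite{Adams86,Adams98,OV},
\[
\Ha^{\delta_1}_\infty(\{Mf > \lambda\}) \le c\,\lambda^{-p}\int_\Omega |f|^p\,d\Ha^{\delta_1}_\infty,
\qquad p > \delta_1/n.
\]
When $\delta_1 \ge \delta_2$ the right-hand side is controlled directly by Proposition~\ref{GeneralizationOV} with $p=\delta_1/\eta$. When $\delta_1 < \delta_2$ the natural move is to replace $M$ by the fractional maximal operator $M_\kappa$ with $\kappa = n(1-\delta_1/\delta_2)$ and invoke the cross-content weak-type bound already used in the proof of Theorem~\ref{weaklimit_kappa}. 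In either case, substituting $\lambda := c_0\exp(c_2\,t^{n/(n-\eta)})$ absorbs the polynomial prefactor into the exponential and delivers the claim.

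The main obstacle is the mismatch between the two Hausdorff contents $\Ha^{\delta_1}_\infty$ and $\Ha^{\delta_2}_\infty$: Proposition~\ref{GeneralizationOV} converts Choquet norms only in the direction of increasing $\delta$, so for $\delta_1 < \delta_2$ the content in the conclusion is coarser than the one controlled by the hypothesis, forcing the case split above or the use of a fractional maximal operator. A secondary technical point is the bookkeeping of the several constants depending on $n,\delta_1,\delta_2,\eta$ and $\diam(\Omega)$ so that the polynomial factors coming from $\lambda^{-p}$ and from the optimization radius do not spoil the exponential decay; this is routine since $\exp(c_2 t^{n/(n-\eta)})$ dominates every polynomial in $t$ as $t\to\infty$.
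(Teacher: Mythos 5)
Your overall plan is sound and in fact \emph{does} differ from the paper's route. The paper proves the theorem by invoking the Mart\'inez--Spector machinery: Lemma~\ref{lem:Hedberg} bounds $I_\eta f$ by a \emph{power} of the fractional maximal function $\M_{\eta-\ve}f$ using the Choquet constraint $\int|f|^p\,d\Ha^\delta_\infty\le 1$ directly (and requires the subcritical $p<\delta/\eta$), and then applies the $L^1$-based weak-type Lemma~\ref{lem:Haudorff-1} for $\M_{\eta-\ve}$ on $\Ha^{n-\eta+\ve}_\infty$. You instead first reduce to the classical $L^{n/\eta}$ bound via Proposition~\ref{GeneralizationOV} and \eqref{basic}, then run Adams' original logarithmic Hedberg split with the plain maximal function $\M$, and finally invoke the strong-type-plus-Chebyshev bound for $\M$ on $\Ha^{\delta_1}_\infty$ at exponent $p=\delta_1/\eta>\delta_1/n$. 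That version is clean and avoids the degeneration of Lemma~\ref{lem:Hedberg} at the critical exponent $p=\delta_2/\eta$. For $\delta_1\ge\delta_2$ your argument is complete and correct.

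The genuine gap is in the case $\delta_1<\delta_2$, which you correctly flag as the obstacle but do not actually close. Replacing $\M$ by $\M_\kappa$ with $\kappa=n(1-\delta_1/\delta_2)$ forces you to redo the Hedberg decomposition with $\M_\kappa$ in the near part; the dyadic sum $\sum_j (2^{-j}r)^{\eta-\kappa}\M_\kappa f(x)$ converges only when $\eta>\kappa$, i.e.\ when $\delta_1>\delta_2(1-\eta/n)$, a constraint your sketch never mentions. So as written, your argument establishes the claim only for $\delta_1>\delta_2\bigl(1-\tfrac{\eta}{n}\bigr)$, not for the full stated range $\delta_1\in(0,n]$. (For what it is worth, the paper's own Lemmas~\ref{lem:Hedberg}--\ref{lem:Haudorff-1} restrict $\ve\in(0,\eta]$ and hence a priori only yield the conclusion for the content exponent in $(n-\eta,n]$; the claimed full range is deferred to \cite{MartinezSpector21}.) To match the theorem you would need either an additional argument for small $\delta_1$ or to trace through exactly how Mart\'inez--Spector treat that regime.
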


We refer  to  \cite[Theorem 1.3]{MartinezSpector21}    which has
slightly different assumptions. Note that if $f \in L^1_{\loc}(\Omega)$ and $\int_{\Omega} |f(y)|^{ \frac{\delta_2}{\eta}} \, d \Ha^{{\delta_2}}_\infty < \infty$, then   
$\|f\|_{L^{n/\eta}(\Omega)} \le c$  with some finite constant $c$ by Proposition~\ref{GeneralizationOV}. 
Hence,  it is possible to use the proof of \cite[Theorem 1.3]{MartinezSpector21} with a slight modification.
Indeed, the proof for Theorem~\ref{thm:Riesz-limit-case} relies on 
Lemma \ref{lem:Hedberg} and Lemma \ref{lem:Haudorff-1}  
which we recall next. 
 
\begin{lemma}[Hedberg-type estimate]\label{lem:Hedberg}
Let $\Omega$ in $\Rn$ be a bounded, open set.  
Let  $\delta \in (0, n]$, $\eta\in (0, n)$, and $p \in (1, \delta/\eta)$.
Then 
there exists a constant $c$ such that
for every $\ve \in(0, \eta]$ for all $x \in \Omega$, the inequality
\[
I_{\eta}f(x) \le c\, \max\Big\{\frac{1}{ 2^{\ve}-1}, \Big(\frac{1}{\delta - \eta p} \Big )^{\frac{np-\delta}{np}}    \Big\} (\M_{\eta-\ve} f(x))^{\frac{\delta -\eta p}{\delta- \eta p +\ve p}}
\]
holds
for all $f \in L^{1}_{\loc}(\Omega)$ with $\int_{\Omega} |f(y)|^{p} \, d \Ha^{\delta}_\infty  \le 1$. 
\end{lemma}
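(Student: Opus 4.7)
The plan is to follow the classical Hedberg splitting strategy, adapted to this fractional Hausdorff content setting. For a radius $r>0$ to be chosen later, I would split
\[
I_{\eta}f(x) = \int_{B(x,r)} \frac{|f(y)|}{|x-y|^{n-\eta}}\,dy + \int_{\Rn\setminus B(x,r)} \frac{|f(y)|}{|x-y|^{n-\eta}}\,dy =: J_1(r) + J_2(r),
\]
with $f$ extended by zero outside $\Omega$, then bound $J_1$ by $\M_{\eta-\ve}f(x)$ times a positive power of $r$, bound $J_2$ by a negative power of $r$ using the Choquet integrability hypothesis, and finally balance the two bounds by a suitable choice of $r$.

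For the near part I would use the dyadic annular decomposition $B(x,r)=\bigcup_{k\ge 0} A_k$ with $A_k=B(x,2^{-k}r)\setminus B(x,2^{-k-1}r)$. On $A_k$ the kernel is bounded by $(2^{-k-1}r)^{\eta-n}$, while the definition of $\M_{\eta-\ve}$ gives $\int_{B(x,2^{-k}r)}|f(y)|\,dy \le (2^{-k}r)^{n-\eta+\ve}\M_{\eta-\ve}f(x)$. Multiplying and summing over $k$ produces a geometric series $\sum_{k\ge 0}2^{-k\ve}=2^\ve/(2^\ve-1)$, yielding $J_1(r)\le \frac{c(n)}{2^\ve-1}\,r^{\ve}\,\M_{\eta-\ve}f(x)$, which accounts for the first of the two singular factors in the statement.

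For the far part I would apply H\"older's inequality with the conjugate pair $(pn/\delta,\,pn/(pn-\delta))$. The assumption $\eta p<\delta$ is exactly what makes the radial integral $\int_{\Rn\setminus B(x,r)}|x-y|^{-(n-\eta)pn/(pn-\delta)}\,dy$ convergent; a direct computation evaluates it to a constant multiple of $\frac{1}{\delta-\eta p}\,r^{-n(\delta-\eta p)/(pn-\delta)}$, and after raising to the power $(pn-\delta)/(pn)$ one obtains the factor $\bigl(1/(\delta-\eta p)\bigr)^{(pn-\delta)/(pn)}\,r^{-(\delta-\eta p)/p}$. For the remaining $L^{pn/\delta}$ norm of $f$, I would combine (\ref{basic}) with Proposition~\ref{GeneralizationOV} applied with $\delta_1=\delta$, $\delta_2=n$ and the function $|f|^{pn/\delta}$, which together with the normalisation $\int_\Omega |f|^p\,d\Ha_\infty^\delta\le 1$ shows that $\|f\|_{L^{pn/\delta}(\Omega)}$ is bounded by a constant depending only on $n$ and $\delta$. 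Putting the pieces together gives $J_2(r)\le c(n,p,\delta)\bigl(1/(\delta-\eta p)\bigr)^{(pn-\delta)/(pn)}r^{-(\delta-\eta p)/p}$.

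Finally, set $s:=\delta-\eta p>0$ and choose $r$ so that the two bounds are equal; then $I_\eta f(x)$ is at most twice the common value, which after a short manipulation equals
\[
2\Big(\tfrac{c(n)}{2^\ve-1}\Big)^{s/(s+\ve p)}\Big(c(n,p,\delta)\bigl(\tfrac{1}{s}\bigr)^{(pn-\delta)/(pn)}\Big)^{\ve p/(s+\ve p)}\bigl(\M_{\eta-\ve}f(x)\bigr)^{s/(s+\ve p)}.
\]
The exponent $s/(s+\ve p)=(\delta-\eta p)/(\delta-\eta p+\ve p)$ is exactly the one in the statement, and since a weighted geometric mean is dominated by the maximum, $X^\theta Y^{1-\theta}\le\max\{X,Y\}$ for $\theta\in[0,1]$, the prefactor is bounded by a constant times $\max\{1/(2^\ve-1),\,(1/(\delta-\eta p))^{(pn-\delta)/(pn)}\}$. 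The main technical obstacle to watch is keeping the two blow-ups, $1/(2^\ve-1)$ as $\ve\to 0^+$ and $(1/(\delta-\eta p))^{(pn-\delta)/(pn)}$ as $p\nearrow\delta/\eta$, tracked separately throughout the splitting, so that they enter the final bound only through a maximum and not through a product; choosing $r$ to balance (rather than to strictly minimise) is what avoids spurious extra factors in the constant.
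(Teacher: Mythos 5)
Your proposal is correct and follows the same Hedberg-splitting strategy that the paper relies on by citing Mart\'inez--Spector's Lemma 3.4 (the paper itself gives no detailed proof, only a pointer and the remark that the Choquet hypothesis yields an $L^q$ bound via Proposition~\ref{GeneralizationOV}). Your treatment of the far part --- converting the normalisation $\int_\Omega |f|^p\,d\Ha^\delta_\infty\le 1$ into an $L^{np/\delta}(\Omega)$ bound through Proposition~\ref{GeneralizationOV} and \eqref{basic}, then applying H\"older with exponents $\big(np/\delta,\,np/(np-\delta)\big)$ so that the factor $\big(1/(\delta-\eta p)\big)^{(np-\delta)/(np)}$ emerges from the radial integral --- together with the dyadic-annulus near part and the balancing choice of $r$, is precisely the ``slight modification'' of the Lorentz-space argument that the paper alludes to.
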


\begin{lemma} \label{lem:Haudorff-1}
Let $\Omega$ be a  bounded, open set in $\Rn$.
If $\eta \in [0, n)$,
then there exists a constant $c$, depending only on the dimension $n$,  such that the inequality
\[
\Ha_\infty^{n- \eta}(\{x \in \Omega: \M_{\eta}f(x) >t\})
\le  \frac{c(n)}{t} \|f\|_{L^1(\Omega)}
\]
holds for all $f \in L^1(\Omega)$.
\end{lemma}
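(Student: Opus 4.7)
The plan is to run the standard Vitali covering argument, adapted to Hausdorff content instead of Lebesgue measure. Since the maximal function in the paper is the centred one and the balls produced will all be centred at points of the level set, an ordinary $5r$-covering lemma will suffice; there is no need to appeal to Besicovitch-type tools.

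First I would fix $t > 0$ and set $E_t := \{x \in \Omega : \M_\eta f(x) > t\}$. For each $x \in E_t$, the definition of $\M_\eta$ produces some $r_x > 0$ with
\[
r_x^{\eta - n} \int_{B(x,r_x)} |f(y)|\, dy > t,
\qquad \text{equivalently} \qquad
r_x^{n-\eta} < \frac{1}{t}\int_{B(x,r_x)} |f(y)|\, dy.
\]
In particular $r_x^{n-\eta} < \|f\|_{L^1(\Omega)}/t$, so the family $\mathcal{F} = \{B(x, r_x) : x \in E_t\}$ has uniformly bounded radii; this is exactly the hypothesis needed to apply the Vitali $5r$-covering lemma.

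Next I would extract by the Vitali lemma a countable pairwise disjoint subfamily $\{B(x_i, r_i)\}_{i}$ of $\mathcal{F}$ such that
\[
E_t \subset \bigcup_i B(x_i, 5 r_i).
\]
This gives an admissible cover of $E_t$ by balls of radii $5r_i$, so by the definition of Hausdorff content
\[
\Ha_\infty^{n-\eta}(E_t) \le \sum_i (5 r_i)^{n-\eta} = 5^{n-\eta} \sum_i r_i^{n-\eta}.
\]
Inserting the estimate on $r_i^{n-\eta}$ from the first step and using that the balls $B(x_i,r_i)$ are pairwise disjoint and contained in $\R^n$, I get
\[
\Ha_\infty^{n-\eta}(E_t) \le \frac{5^{n-\eta}}{t}\sum_i \int_{B(x_i,r_i)} |f(y)|\, dy \le \frac{5^{n-\eta}}{t}\, \|f\|_{L^1(\Omega)},
\]
so the claim holds with $c(n) = 5^n$ (any constant depending only on $n$ would do).

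The only delicate point is that the Vitali lemma requires uniformly bounded radii; that is handled automatically once one observes that $f \in L^1(\Omega)$ forces $r_x^{n-\eta} < \|f\|_{L^1(\Omega)}/t$. Beyond that bookkeeping, everything reduces to the interplay between the disjointness of the selected balls (used to sum the $L^1$-integrals) and the $(n-\eta)$-power that appears when we pass from radii to Hausdorff content.
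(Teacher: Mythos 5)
Your proof is correct: the bound $r_x^{n-\eta} < \|f\|_{L^1(\Omega)}/t$ gives uniformly bounded radii, the Vitali $5r$-covering lemma then produces the disjoint subfamily, and disjointness together with the $(n-\eta)$-power yields $\Ha_\infty^{n-\eta}(E_t) \le 5^{n-\eta} t^{-1}\|f\|_{L^1(\Omega)}$. The paper itself does not write out a proof but cites \cite[Lemma 3.5]{MartinezSpector21} and \cite[Lemma 4.6]{HH-S_AAG}, which use exactly this covering argument, so your route is the intended one.
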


A  proof for Lemma \ref{lem:Hedberg}  is similar to the proof of  \cite[Lemma 3.4]{MartinezSpector21} where functions from Lorentz spaces are considered.
For the proof  of Lemma \ref{lem:Haudorff-1} we refer to
\cite[Lemma 3.5]{MartinezSpector21} and also to \cite[Lemma 4.6]{HH-S_AAG} where a variable order Riesz potential version is  considered.
Now we are ready for the proof for Theorem \ref{thm:main-p=n}. 

 \begin{proof}[Proof of Theorem \ref{thm:main-p=n}]
 Let $B=B(x_0, c_0(\Omega)\dist (x_0,\partial\Omega ))$ be a ball in $\Omega$  appearing in \eqref{pointwise}.
Let $m >0$ be a constant that will be chosen later.  
The pointwise estimate (\ref{pointwise}) yields that there exists a positive constant $c_1<\infty$ independent of $u$ such that
 \[
 \begin{split}
 &\int_{\Omega} \exp\bigg(\bigg (\frac{m}{1+|\Omega|} \vert u(x) - u_B\vert  \bigg)^{\frac{n}{n-1}}\bigg) \, d \Ha^{\delta_1}_\infty \\
 &\quad\le   \int_{\Omega} \exp\bigg( \bigg(\frac{m c_1}{1+|\Omega\vert}  I_{1} |\nabla u| (x)\bigg)^{\frac{n}{n-1}} \bigg) \, d \Ha^{\delta_1}_\infty \,.
 \end{split}
 \]
 When we use the definition of the Choquet integral and split the interval of integration into two parts we obtain
 \[
 \begin{split}
 &\int_{\Omega} \exp\bigg(\bigg (\frac{m}{1+|\Omega|} \vert u(x) - u_B\vert  \bigg)^{\frac{n}{n-1}}\bigg) \, d \Ha^{\delta_1}_\infty \\
 &\quad  = \int_0^\infty \Ha^{\delta_1}_\infty\Bigg(\bigg\{x \in \Omega: \exp\bigg( \bigg(\frac{m c_1}{1+|\Omega|}  I_{1} |\nabla u|(x) \bigg)^{\frac{n}{n-1}}\Bigg) >t \bigg\} \Bigg) \, dt\\
 &\quad= \int_0^1 \Ha^{\delta_1}_\infty\Bigg(\bigg\{x \in\Omega : \bigg(\frac{ I_{1} |\nabla u|(x)}{1+|\Omega|} \bigg)^{\frac{n}{n-1}}> \frac{\log (t)}{(mc_1)^{n/(n-1))}}\bigg\}\Bigg) \, dt\\
&\qquad +  \int_1^\infty \Ha^{\delta_1}_\infty \Bigg(\bigg\{x \in \Omega : \bigg(\frac{ I_{1} |\nabla u|(x)}{1+|\Omega|}\bigg)^{\frac{n}{n-1}} > \frac{\log (t)}{(mc_1)^{{n}/(n-1)}}\bigg\}\Bigg)\, dt\,.
\end{split}
\]
The integral over the  unit interval is controlled by
$ \Ha^{\delta_1}_\infty(\Omega)$.
For the second integral over the unbounded interval 
we apply  Theorem~\ref{thm:Riesz-limit-case} to  $|\nabla u|/(1+ |D|)$ 
in the case $\eta =1$ and obtain
\[
\begin{split}
 &\Ha^{\delta_1}_\infty \Bigg(\bigg\{x \in \Omega : I_{1} \bigg(\frac{ |\nabla u|}{1+|\Omega|}\bigg)(x) > \bigg(\frac{\log (t)}{(mc_1)^{n/(n-1)}} \bigg)^{\frac{n-1}{n}}\bigg\}\Bigg)\\
  &\qquad\le 
 c_2 \exp\bigg(- c_3 \frac{\log (t)}{(mc_1)^{n/((n-1))}} \bigg) 
 = c_2 t^{-\frac{c_3}{(mc_1)^{n/(n-1)}}}.
\end{split}
\] 
Choosing $m>0$ to be so small that $\frac{c_3}{(mc_1)^{n/(n-1)}}>1$ implies
\[
 \begin{split}
 &\int_1^\infty \Ha^{\delta_1}_\infty \Bigg(\bigg\{x \in \Omega :  I_{1} \bigg(\frac{ |\nabla u|}{1+|\Omega\vert}\bigg)(x) > \bigg(\frac{\log (t)}{(mc_1)^{n/(n-1)}} \bigg)^{\frac{n-1}{n}}\bigg\}\Bigg)\, dt\\
 &\qquad \le \int_1^\infty c_2 t^{-\frac{c_3}{(mc_1)^{n/(n-1)}}} \, dt =: c_4< \infty.
 \end{split}
 \]
Inequality (\ref{basicTr}) follows by taking
\[
a=\biggl(\frac{m}{1+\vert \Omega\vert}\biggr)^{\frac{n}{n-1}} \mbox {and }
b= \Ha^{\delta_1}_\infty(\Omega) + c_4\,. \qedhere
\]
 \end{proof}

\section{ Results for $C^1_0$-functions in the case $\frac{\delta}{n} \le p \le \delta$}\label{Section:C^1_0}

In this section we give  to $C^1_0$-functions
some of the  corresponding results of the previous  sections, Sections \ref{Section:PS} and \ref{Section:Trudinger}.
We recall the following theorem from \cite{HH-S_JFA}.

\begin{theorem}\cite[Theorem 4.2]{HH-S_JFA}\label{thm:Poincare-0}
Let $\Omega $ is  a bounded domain  in $\Rn$, $\delta  \in (0, n]$, and   $p \in (\delta/n, \delta)$.
If  $\kappa \in [0, 1)$, then  there exists a constant $c$ depending only on $n$, $\delta$,
$\kappa$, and $p$ such that  
\[
\Big(\int_\Omega |u(x)|^{\frac{p(\delta- \kappa p) }{\delta-p}} d \Ha^{\delta -\kappa p}_\infty \Big)^{\frac{\delta-p}{p(\delta- \kappa p)}}
\le c \Big(\int_{\Omega} |\nabla u(x)|^{p} \, d \Ha^{\delta}_\infty \Big)^{\frac{1}{p}}
\] 
for all $u \in C^1_0(\Omega)$.
\end{theorem}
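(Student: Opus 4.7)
The plan is to reduce the inequality to a Choquet-integral mapping property of the Riesz potential by means of a Hedberg decomposition. First, extend $u\in C^1_0(\Omega)$ by zero to $\Rn$; the resulting function lies in $C^1_0(\Rn)$ and satisfies the classical pointwise bound $|u(x)|\le c(n)\,I_1|\nabla u|(x)$ for every $x\in\Rn$, obtained from the fundamental theorem of calculus by integrating $\nabla u$ along rays from $x$ and averaging over the unit sphere. This converts the problem into controlling $I_1|\nabla u|$ in the Choquet sense against the two different Hausdorff contents on the two sides of the inequality.

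Next, I would apply the Hedberg-type estimate of Lemma~\ref{lem:Hedberg} with $\eta=1$ and $\ve=1-\kappa\in(0,1]$ (admissible because $\kappa\in[0,1)$). Setting $g:=|\nabla u|$ and $G:=\bigl(\int_\Omega g^{p}\,d\Ha^{\delta}_{\infty}\bigr)^{1/p}$, normalising by $G$ and using that $I_1$ is linear while $\M_\kappa$ is positively homogeneous, the lemma produces
\[
I_1 g(x)\le c\,G^{1-\alpha}\bigl(\M_{\kappa}g(x)\bigr)^{\alpha},\qquad \alpha:=\frac{\delta-p}{\delta-\kappa p}.
\]
Write $q:=\frac{p(\delta-\kappa p)}{\delta-p}$ for the target exponent. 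The key algebraic identity $q\alpha=p$ allows one to raise the pointwise bound $|u|\le c\,G^{1-\alpha}(\M_\kappa g)^{\alpha}$ to the $q$-th power and integrate against $\Ha^{\delta-\kappa p}_{\infty}$, giving
\[
\int_\Omega|u(x)|^{q}\,d\Ha^{\delta-\kappa p}_{\infty}\le c\,G^{q(1-\alpha)}\int_\Omega\bigl(\M_{\kappa}g\bigr)^{p}\,d\Ha^{\delta-\kappa p}_{\infty}.
\]

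The final step is to invoke Adams' strong-type $L^{p}$ bound for the fractional maximal operator in the Choquet setting,
\[
\int_\Omega\bigl(\M_{\kappa}g\bigr)^{p}\,d\Ha^{\delta-\kappa p}_{\infty}\le c\int_\Omega g^{p}\,d\Ha^{\delta}_{\infty}=cG^{p},
\]
and to observe that $q(1-\alpha)+p=q$; the conclusion follows after extracting the $q$-th root. The main obstacle is precisely the strong-type maximal-function bound, which plays in this Hausdorff-content setting the role of the Hardy--Littlewood--Sobolev theorem and must be established via a dyadic or Whitney-type decomposition compatible with the mere subadditivity of $\Ha^{\delta}_{\infty}$. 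A secondary care point is coordinating the normalisation inside Hedberg's estimate so that the two identities $q\alpha=p$ and $q(1-\alpha)+p=q$ align cleanly, but once these are noted the remainder of the argument is routine.
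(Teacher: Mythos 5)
The paper does not supply its own proof of this statement; it is recalled verbatim from the authors' earlier work (cited as Theorem 4.2 there), so there is no internal proof to compare against. That said, your outline --- extend by zero, apply the pointwise bound $|u|\le c(n)\,I_1|\nabla u|$ of \eqref{pointwise_0}, use a Hedberg decomposition to trade the Riesz potential for a fractional maximal operator, then invoke a strong-type Choquet bound for $\M_\kappa$ --- is the natural route and mirrors what the present paper does for its weak-type relatives (Theorems~\ref{weaklimit_kappa} and~\ref{weaklimit-0-kappa}). The bookkeeping is coherent: with $\alpha=(\delta-p)/(\delta-\kappa p)$ and $q=p(\delta-\kappa p)/(\delta-p)$ the single identity $q\alpha=p$ (equivalently $q(1-\alpha)+p=q$) makes the powers of $G$ close up, and the strong-type bound $\int_\Omega(\M_\kappa g)^p\,d\Ha_\infty^{\delta-\kappa p}\le c\int_\Omega g^p\,d\Ha_\infty^{\delta}$ for $p>\delta/n$ is exactly Adams' theorem (Theorem~7(a) of \cite{Adams98}); it should be cited, not re-derived.

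The concrete gap is in your use of Lemma~\ref{lem:Hedberg}. As stated, that lemma requires $p\in(1,\delta/\eta)$, in particular $p>1$, whereas the theorem you are proving covers all $p\in(\delta/n,\delta)$. Since $\delta/n<1$ whenever $\delta<n$, the sub-range $p\le1$ is outside the lemma's hypotheses, and when $\delta\le1$ the interval $(1,\delta)$ in the lemma is in fact empty while the theorem's range $(\delta/n,\delta)$ is not. So invoking the lemma verbatim does not cover the full parameter range. The restriction $p>1$ appears to be an artifact of the Lorentz-space setting from which Lemma~\ref{lem:Hedberg} is borrowed: the Hedberg split itself only needs the far-field H\"older step, which requires $np/\delta>1$, i.e.\ $p>\delta/n$, together with Proposition~\ref{GeneralizationOV} to convert the Lebesgue $L^{np/\delta}$ norm back into the Choquet datum. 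You should either re-prove the Hedberg estimate under the weaker hypothesis $p\in(\delta/n,\delta/\eta)$, or at least flag that the lemma as written is not directly applicable.
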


This theorem implies the following corollaries.
When we choose  $\kappa =0$ and the exponent of the right-hand side 
of the absolute value of the gradient
to be $\delta/(n-\ve )$, Corollary \ref{epsilon_0} follows.
Using H\"older's inequality and Corollary \ref{epsilon_0} yields Corollary \ref{epsilon_righthandside}.

\begin{corollary}\label{epsilon_0}
Suppose that  $\Omega$ is a bounded  domain in $\Rn$, $n\geq 2$.
If $\delta \in(0, n] $,
then  for any fixed $\ve \in (0, n-1)$ there exists a constant $c$ such that  the inequality
\begin{equation*}
\Big(\int_\Omega |u(x) |^{\frac{\delta}{n-1-\ve}} d \Ha^{\delta}_\infty \Big)^{\frac{n-1-\ve }{\delta}}
\le c
\Big(\int_{\Omega} |\nabla u(x)|^{\frac{\delta}{n-\ve}} \, d \Ha^{\delta}_\infty \Big)^{\frac{n-\ve}{\delta}}
\end{equation*}
holds for all $u\in C^1_0(\Omega )$,
the constant $c$ being independent of $u$.
\end{corollary}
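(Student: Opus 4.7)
The plan is to deduce Corollary \ref{epsilon_0} as a direct specialization of Theorem \ref{thm:Poincare-0}. The strategy is exactly the same as the one sketched in the text preceding the corollary: set $\kappa = 0$ and choose the integrability exponent of the gradient on the right-hand side to be $p = \delta/(n-\ve)$. I would first verify that this choice is admissible in Theorem \ref{thm:Poincare-0}, namely that $p \in (\delta/n,\delta)$. Since $\ve \in (0,n-1)$, we have $n-\ve \in (1,n)$, and therefore
\[
\frac{\delta}{n} < \frac{\delta}{n-\ve} < \delta,
\]
so the hypothesis on $p$ in Theorem \ref{thm:Poincare-0} is satisfied.

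Next I would substitute the chosen values $\kappa=0$ and $p=\delta/(n-\ve)$ into each of the exponents appearing in Theorem \ref{thm:Poincare-0}, and check that they match the exponents in the statement of Corollary \ref{epsilon_0}. With $\kappa = 0$, the Hausdorff content on the left-hand side is $\Ha^{\delta-\kappa p}_\infty = \Ha^{\delta}_\infty$. The integrand exponent on the left-hand side simplifies as
\[
\frac{p(\delta-\kappa p)}{\delta - p} = \frac{p\,\delta}{\delta - p} = \frac{\delta^2/(n-\ve)}{\delta\,(n-\ve-1)/(n-\ve)} = \frac{\delta}{n-1-\ve},
\]
and the corresponding outer exponent becomes
\[
\frac{\delta - p}{p(\delta - \kappa p)} = \frac{n-1-\ve}{\delta}.
\]
On the right-hand side, the exponent of $|\nabla u|$ is simply $p = \delta/(n-\ve)$ and the outer exponent is $1/p = (n-\ve)/\delta$. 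These are precisely the exponents appearing in the claimed inequality.

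Finally, since $u \in C^1_0(\Omega)$ belongs to the class considered in Theorem \ref{thm:Poincare-0}, invoking that theorem with the specific choices above yields the desired inequality, with a constant $c$ depending only on $n$, $\delta$, and $\ve$ (the dependence on $p$ and $\kappa$ in Theorem \ref{thm:Poincare-0} is absorbed into $\ve$, since $p$ is determined by $\delta$, $n$, $\ve$). There is no genuine obstacle here — the entire content of the proof is the algebraic verification that the five exponents above simplify as claimed, together with the check that $p=\delta/(n-\ve)$ lies in the range $(\delta/n,\delta)$ precisely when $\ve \in (0,n-1)$.
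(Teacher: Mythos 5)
Your proof is correct and is precisely the argument the paper indicates: specialize Theorem \ref{thm:Poincare-0} to $\kappa=0$ and $p=\delta/(n-\ve)$, verify $p\in(\delta/n,\delta)$ for $\ve\in(0,n-1)$, and simplify the exponents. The algebraic verifications all check out, so there is nothing to add.
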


\begin{corollary}\label{epsilon_righthandside}
Suppose that  $\Omega$ is a bounded  domain in $\Rn$, $n\geq 2$.
If $\delta \in (0, n]$,
then  for any fixed $\ve \in (0, n-1)$ there exists a constant $c$ such that  the inequality
\begin{equation*}
\Big(\int_\Omega |u(x) |^{\frac{\delta}{n-1}} d \Ha^{\delta}_\infty \Big)^{\frac{n-1}{\delta}}
\le c
\Big(\int_{\Omega} |\nabla u(x)|^{\frac{\delta}{n-\ve}} \, d \Ha^{\delta}_\infty \Big)^{\frac{n-\ve}{\delta}}
\end{equation*}
holds for all $u\in C^1_0(\Omega )$, the constant $c$ being independent of $u$.
\end{corollary}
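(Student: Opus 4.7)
The proof plan is to mirror exactly the derivation that produced Corollary~\ref{epsilon_righthandside} in Section~\ref{Section:PS} (the John-domain version) from Corollary~\ref{epsilon}. Since $u \in C^1_0(\Omega)$ we do not need to subtract a mean or take an infimum: the natural ``representative'' is simply $b=0$, and Corollary~\ref{epsilon_0} already plays the role of Corollary~\ref{epsilon}. Combining H\"older's inequality for Choquet integrals with Corollary~\ref{epsilon_0} yields the claim.

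\textbf{Step 1 (H\"older's inequality for Choquet integrals).} I would apply H\"older's inequality for integrals against $\Ha^\delta_\infty$ with exponent pair $\bigl(\tfrac{n-1}{n-1-\ve},\tfrac{n-1}{\ve}\bigr)$, written for the product $|u(x)|^{\delta/(n-1)}\cdot 1$, to obtain
\[
\int_\Omega |u(x)|^{\frac{\delta}{n-1}}\,d\Ha^\delta_\infty \le C_0 \Bigl(\int_\Omega |u(x)|^{\frac{\delta}{n-1-\ve}}\,d\Ha^\delta_\infty\Bigr)^{\frac{n-1-\ve}{n-1}}\bigl(\Ha^\delta_\infty(\Omega)\bigr)^{\frac{\ve}{n-1}},
\]
where $C_0$ is the absolute constant arising from H\"older for Choquet integrals (the factor of $2$ in the analogous display of Section~\ref{Section:PS}). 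Raising both sides to the power $\tfrac{n-1}{\delta}$ gives
\[
\Bigl(\int_\Omega |u(x)|^{\frac{\delta}{n-1}}\,d\Ha^\delta_\infty\Bigr)^{\frac{n-1}{\delta}} \le C_1\bigl(\Ha^\delta_\infty(\Omega)\bigr)^{\frac{\ve}{\delta}}\Bigl(\int_\Omega |u(x)|^{\frac{\delta}{n-1-\ve}}\,d\Ha^\delta_\infty\Bigr)^{\frac{n-1-\ve}{\delta}},
\]
with $C_1=C_1(n,\delta,\ve)$ finite.

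\textbf{Step 2 (apply Corollary~\ref{epsilon_0}).} The exponents on the right of the displayed inequality are precisely those controlled by Corollary~\ref{epsilon_0} for the same $\ve\in(0,n-1)$, so
\[
\Bigl(\int_\Omega |u(x)|^{\frac{\delta}{n-1-\ve}}\,d\Ha^\delta_\infty\Bigr)^{\frac{n-1-\ve}{\delta}}\le c\,\Bigl(\int_\Omega |\nabla u(x)|^{\frac{\delta}{n-\ve}}\,d\Ha^\delta_\infty\Bigr)^{\frac{n-\ve}{\delta}}
\]
for all $u\in C^1_0(\Omega)$, with $c=c(n,\delta,\ve)$. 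Chaining this with the inequality from Step~1 gives the asserted estimate with a constant of the form $c\cdot C_1\cdot\bigl(\Ha^\delta_\infty(\Omega)\bigr)^{\ve/\delta}$, which is independent of $u$ because $\Omega$ is bounded (so $\Ha^\delta_\infty(\Omega)<\infty$).

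There is essentially no obstacle here: both ingredients are already in the paper, and the only thing to verify is that one may apply H\"older for Choquet integrals against a monotone (non-additive) set function with a universal constant, which is precisely the step invoked verbatim in Section~\ref{Section:PS} prior to Theorem~\ref{cor:limit-case-1}. Accordingly, the proof is a one-line reduction and can be presented in the same style as the remark immediately following Corollary~\ref{epsilon} in Section~\ref{Section:PS}.
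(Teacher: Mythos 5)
Your proposal is correct and follows exactly the route the paper indicates: Hölder's inequality for Choquet integrals with the exponent pair $\bigl(\tfrac{n-1}{n-1-\ve},\tfrac{n-1}{\ve}\bigr)$, followed by Corollary~\ref{epsilon_0}. The constant you track, proportional to $\bigl(\Ha^\delta_\infty(\Omega)\bigr)^{\ve/\delta}$, matches the dependence implicit in the paper's argument, so there is nothing to add.
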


If we use Theorem \ref{thm:Poincare-0} instead of 
Theorem \ref{thm:Poincare} in the proof of
Theorem \ref{cor:limit-case-1} and work through the proof of Theorem  \ref{cor:SP-2d-epsilon},
we obtain the following result.

\begin{theorem}
Suppose that  $\Omega$ is a bounded domain in $\Rn$,  $n\geq 2$.
If $\delta \in (0,n]$ is fixed, then the inequality
\begin{equation*}
  \Big(\int_\Omega |u(x)|^{\frac{\delta}{n-1}} d \Ha^{\delta}_\infty \Big)^
{\frac{n-1}{\delta}} \le c \Big(\int_{\Omega} |\nabla u(x)|^{\frac{\delta}{n}} \, d \Ha^{\delta- \ve}_\infty \Big)^{\frac{n}{\delta}}
\end{equation*}
holds for all $\ve \in(0, \delta(1-\frac1n))$ and
for all $u \in C^1_0(\Omega)$.
 Here $c$ is a constant depending only on $\delta, \ve, n,$ and
$\Ha^{\delta}_\infty(\Omega)$.
\end{theorem}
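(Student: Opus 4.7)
The plan is to mirror the proofs of Theorem \ref{cor:limit-case-1} and Theorem \ref{cor:SP-2d-epsilon}, substituting the zero-boundary Sobolev--Poincaré inequality (Theorem \ref{thm:Poincare-0}) for the John-domain mean-value version (Theorem \ref{thm:Poincare}). Since $u\in C^1_0(\Omega)$, the mean-value correction $u_B$ and the infimum over constants $b\in\R$ drop out, so the conclusion has no such correction.

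First I would establish the $C^1_0$ analogue of Theorem \ref{cor:limit-case-1}, namely
\[
\Big(\int_\Omega |u(x)|^{\frac{\delta^2}{\delta(n-1)-\ve n}}\, d\Ha^{\delta}_\infty\Big)^{\frac{\delta(n-1)-\ve n}{\delta^2}} \le c \Big(\int_\Omega |\nabla u(x)|^{\delta/n}\, d\Ha^{\delta-\ve}_\infty\Big)^{n/\delta}.
\]
The first step is to apply Proposition \ref{GeneralizationOV} with $\delta_1=\delta-\ve<\delta=\delta_2$ to the function $|u|^{\delta^2/(\delta(n-1)-\ve n)}$; this converts the $\Ha^{\delta}_\infty$-integral on the left into a $\Ha^{\delta-\ve}_\infty$-integral of $|u|$ raised to the exponent $\delta(\delta-\ve)/(\delta(n-1)-\ve n)$. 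The second step is to apply Theorem \ref{thm:Poincare-0} with the underlying Hausdorff dimension replaced by $\delta-\ve$, the exponent $p=\delta/n$, and $\kappa=0$; this brings in the desired gradient integral.

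Second, I would deduce the stated inequality from the intermediate one by Hölder's inequality on the left-hand side, with exponents $(\frac{\delta(n-1)}{\ve n},\frac{\delta(n-1)}{\delta(n-1)-\ve n})$, just as in the proof of Theorem \ref{cor:SP-2d-epsilon}. This introduces a factor of $\Ha^{\delta}_\infty(\Omega)^{\ve n/(\delta(n-1))}$, which is bounded by $\max\{1,\Ha^{\delta}_\infty(\Omega)\}$ because the exponent lies in $(0,1)$ under the assumption $\ve<\delta(1-1/n)$.

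The main point to verify is that $\ve\in(0,\delta(1-1/n))$ is exactly what is needed to apply Theorem \ref{thm:Poincare-0} with Hausdorff dimension $\delta-\ve$ and exponent $p=\delta/n$: its admissibility condition $p\in((\delta-\ve)/n,\delta-\ve)$ rearranges precisely to $0<\ve<\delta(1-1/n)$. Beyond this parameter bookkeeping there is no additional analytic obstacle, since the pointwise Riesz-potential estimate underlying Theorem \ref{thm:Poincare-0} is available for arbitrary bounded domains when the function has compact support, and so the John-domain assumption used in Theorem \ref{cor:SP-2d-epsilon} is no longer required.
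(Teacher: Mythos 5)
Your proposal is correct and is essentially the paper's own proof: the paper states the result precisely by saying that one should replace Theorem \ref{thm:Poincare} with Theorem \ref{thm:Poincare-0} in the proof of Theorem \ref{cor:limit-case-1} and then run the argument of Theorem \ref{cor:SP-2d-epsilon}, which is exactly the two-step plan (Proposition \ref{GeneralizationOV} to lower the content dimension on the left, the zero-boundary Poincar\'e--Sobolev inequality at dimension $\delta-\ve$ with $p=\delta/n$ and $\kappa=0$, then H\"older with exponents $\bigl(\tfrac{\delta(n-1)}{\ve n},\tfrac{\delta(n-1)}{\delta(n-1)-\ve n}\bigr)$) you carried out. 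Your parameter check that $\ve\in(0,\delta(1-\tfrac1n))$ is precisely the admissibility condition $\delta/n\in((\delta-\ve)/n,\delta-\ve)$ is also correct.
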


Following the proof for Theorem \ref{deltas},
albeit refering to \cite[Theorem 4.7 (a)]{HH-S_JFA}
instead of \cite[Thereom 3.2]{HH-S_JFA},
yields a result  with  more flexibility in  the  Hausdorff dimensions and exponents for $C^1_0$-functions, too.
\begin{theorem}
Suppose that  $\Omega$ is a bounded  domain in $\Rn$, $n\geq 2$.
If $0<\delta <n-1$ and
$0<\delta_2<\frac{n\delta}{n-1}$,
or 
if $n-1\le \delta\le n$ and $0<\delta_2 \le n$,
then the inequality
\begin{equation*}
\Big(\int_\Omega |u(x)|^{\frac{\delta}{n-1}} d \Ha^{\delta_2}_\infty \Big)^{\frac{n-1}{\delta}}
\le c
\Big(\int_{\Omega} |\nabla u(x)|^{\frac{\delta}{n}} \, d \Ha^{\delta_2(1-\frac{1}{n})}_\infty \Big)^{\frac{n}{\delta}}
\end{equation*}
holds for all $u\in C^1_0(\Omega )$.
\end{theorem}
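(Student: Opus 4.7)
The plan is to mimic the proof of Theorem~\ref{deltas} essentially verbatim, with two modifications: since $u\in C^1_0(\Omega)$, I would write $|u(x)|$ in place of $|u(x)-b|$ throughout and drop the infimum over $b\in\R$, and I would invoke the zero-boundary-value Poincar\'e inequality \cite[Theorem 4.7(a)]{HH-S_JFA} in place of the John-domain Poincar\'e inequality \cite[Theorem 3.2]{HH-S_JFA}. Because the former requires no regularity on $\partial\Omega$, the resulting statement is valid on an arbitrary bounded domain, which matches the hypothesis.

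Concretely, I would introduce an auxiliary parameter $\delta_1$ with $0<\delta_1<\delta_2$ to be fixed later and apply Proposition~\ref{GeneralizationOV} to the nonnegative function $f=|u|^{\delta/(n-1)}$; raising the resulting inequality to the $(n-1)/\delta$-power yields
\[
\Big(\int_\Omega |u(x)|^{\frac{\delta}{n-1}}\, d \Ha^{\delta_2}_\infty \Big)^{\frac{n-1}{\delta}}
\le \Big(\tfrac{\delta_2}{\delta_1}\Big)^{\frac{n-1}{\delta}}
\Big(\int_\Omega |u(x)|^{\frac{\delta \delta_1}{(n-1)\delta_2}}\, d \Ha^{\delta_1}_\infty \Big)^{\frac{\delta_2(n-1)}{\delta \delta_1}}.
\]
The decisive choice is $\delta_1=\delta_2(1-\tfrac1n)$, which simultaneously converts the inner exponent of $|u|$ to $\delta/n$ and the outer exponent to $n/\delta$. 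Applying \cite[Theorem 4.7(a)]{HH-S_JFA} with integration exponent $p=\delta/n$ against the $\delta_1$-content then bounds the right-hand side by $c\bigl(\int_\Omega |\nabla u|^{\delta/n}\, d\Ha^{\delta_1}_\infty\bigr)^{n/\delta}$, provided the admissibility requirement $p>\delta_1/n$ holds, which unfolds to $\delta>\delta_2(1-\tfrac1n)$, equivalently $\delta_2<n\delta/(n-1)$.

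It remains to verify that this single constraint $\delta_2<n\delta/(n-1)$, coupled with the background $\delta_2\le n$, reproduces the two-case hypothesis in the statement: for $0<\delta<n-1$ the bound $n\delta/(n-1)<n$ makes Poincar\'e admissibility the binding condition, while for $n-1\le\delta\le n$ one has $n\delta/(n-1)\ge n$, so $\delta_2\le n$ alone suffices. No essential obstacle is expected; the whole argument is a transcription of the Theorem~\ref{deltas} proof, and the only bookkeeping is watching how the exponents collapse once $\delta_1=\delta_2(n-1)/n$ is substituted.
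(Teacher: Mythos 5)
Your proof is correct and is essentially the paper's own argument: the paper establishes this theorem by running the proof of Theorem~\ref{deltas} with the zero-boundary Poincar\'e inequality \cite[Theorem 4.7(a)]{HH-S_JFA} substituted for the John-domain Poincar\'e inequality \cite[Theorem 3.2]{HH-S_JFA}, which is exactly what you do. (A negligible prose slip: you say ``raising to the $(n-1)/\delta$-power,'' but the displayed inequality you obtain corresponds to the $(n-1)\delta_2/\delta$-power; the inequality itself is right.)
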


Let us recall 
the pointwise estimate
\begin{equation}\label{pointwise_0}
|u(x)| \le c(n) \int_\Omega \frac{|\nabla u(y)|}{|x-y|^{n-1}}\,dy
\end{equation}
that holds for  $u \in C^1_0(\Omega)$ and all $x \in \Omega$ 
\cite[Remark 2.8.6]{Ziemer}.

By working as in the proof of Theorem~\ref{weaklimit_kappa} we obtain the following theorem.
Indeed, in the proof we just use
(\ref{pointwise_0})
instead of (\ref{pointwise}).

\begin{theorem}\label{weaklimit-0-kappa}
Suppose that  $\Omega$ is a bounded domain in $\Rn$. Let 
$\delta \in (0, n]$  and $\kappa\in [0,1)$ be given. If 
$u \in C^1_0(\Omega)$,
then  for every $t>0$
\begin{equation*}
\Ha_\infty^{\delta\frac{n-\kappa}{n}}\biggl(\{x\in\Omega :\vert u(x)\vert >t\}\biggr)^{\frac{n-1}{\delta}}
\le \frac{c}{t} \Big(\int_{\Omega}\vert \nabla u(x)\vert^{\frac{\delta}{n}}\,d\Ha_\infty^{\delta} \Big)^{\frac{n}{\delta}}\,,
\end{equation*}
where $c$ is a constant which depends only on $n$, $\delta$, and $\kappa$.
\end{theorem}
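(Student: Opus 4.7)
The plan is to mimic the proof of Theorem~\ref{weaklimit_kappa} verbatim, substituting the John-free pointwise estimate \eqref{pointwise_0}, which is valid for any $u \in C^1_0(\Omega)$ on a bounded domain, for the John-domain pointwise estimate \eqref{pointwise}. Since \eqref{pointwise_0} involves only a dimensional constant, the argument that produced Theorem~\ref{weaklimit_kappa} will now yield a bound free of the John parameters $\alpha$ and $\beta$.

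First I would apply \eqref{pointwise_0} to obtain $|u(x)| \le c(n)\, I_1 |\nabla u|(x)$ for every $x \in \Omega$, with $|\nabla u|$ extended by zero outside $\Omega$. Next, following the step that produced \eqref{pointwisemax} in the proof of Theorem~\ref{weaklimit_kappa}, I would invoke \cite[Lemma 2]{HH-S_Proc2022} with $s=1$ to split the Riesz potential and arrive at a pointwise estimate of the form
\[
|u(x)| \le c\, \bigl(M_\kappa |\nabla u|(x)\bigr)^{\frac{n-1}{n-\kappa}} N^{\frac{1-\kappa}{n-\kappa}},
\]
where $N = \bigl(\int_\Omega |\nabla u(y)|^{\delta/n}\, d\Ha^\delta_\infty\bigr)^{n/\delta}$; the hypothesis $\kappa \in [0,1)$ is used precisely here to keep the exponents in the admissible range.

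For each $t>0$, I would then include $\{|u|>t\}$ into $\{M_\kappa(c\, N^{(1-\kappa)/(n-1)} |\nabla u|) > t^{(n-\kappa)/(n-1)}\}$ and apply the Adams weak-type estimate for the fractional maximal operator with respect to Hausdorff content \cite[Theorem 7(ii)]{Adams98}. After pulling the factor $N^{(1-\kappa)/(n-1)}$ out of the resulting $\Ha^\delta_\infty$-integral and collecting the powers of $t$ and of $\int_\Omega |\nabla u|^{\delta/n}\, d\Ha^\delta_\infty$, the chain of inequalities collapses to
\[
\Ha^{\delta(n-\kappa)/n}_\infty(\{|u|>t\}) \le c\, t^{-\delta/(n-1)} \Bigl(\int_\Omega |\nabla u(x)|^{\delta/n}\, d\Ha^\delta_\infty\Bigr)^{n/(n-1)}.
\]
Raising both sides to the power $(n-1)/\delta$ yields the claimed inequality.

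The only delicate part is the exponent bookkeeping in the last step — one must verify that the factor $N^{(1-\kappa)\delta/[(n-1)(n-\kappa)]}$ emerging from the integral combines with the factor $\bigl(\int_\Omega |\nabla u|^{\delta/n}\, d\Ha^\delta_\infty\bigr)^{n/(n-\kappa)}$ to produce exactly $\bigl(\int_\Omega |\nabla u|^{\delta/n}\, d\Ha^\delta_\infty\bigr)^{n/(n-1)}$. This check is formally identical to the corresponding computation in the proof of Theorem~\ref{weaklimit_kappa} and requires no new ingredient. Because \eqref{pointwise_0} contains no John-domain parameter, the constant in the final estimate depends only on $n$, $\delta$, and $\kappa$, as required.
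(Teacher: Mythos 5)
Your proposal is correct and follows exactly the route indicated in the paper: the paper's own justification of Theorem~\ref{weaklimit-0-kappa} is the single remark that one repeats the proof of Theorem~\ref{weaklimit_kappa} with the pointwise estimate \eqref{pointwise_0} in place of \eqref{pointwise}. Your exponent bookkeeping and the resulting dependence of the constant on $n$, $\delta$, $\kappa$ only all match the paper's intent.
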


Choosing $\kappa =0$ in Theorem
\ref{weaklimit-0-kappa}  implies Corollary \ref{weaklimit-0-0}.

Combining ({\ref{pointwise_0}) and \cite[Corollary 1.4]{MartinezSpector21} yields the following theorem.
We point out that 
\cite[Corollary 1.4]{MartinezSpector21}
can be used since $  \int_{\Omega} |\nabla u(y)|^{\delta} \, d \Ha^{\delta}_\infty \le 1$ gives that $\|\nabla u\|_{L^n(\Omega)} \le c$
 with some constant $c$  which is independent of $u$ by Proposition~\ref{GeneralizationOV}.

\begin{theorem} 
 Let $\Omega$ be a bounded domain in $\R^n$, $n\geq 2$, and let $\delta_1, \delta_2 \in (0, n]$.  
  Then there exist positive constants $a$ and $b>0$ such that
 \[
\int_{\Omega} \exp\big( a |u(x)|^{\frac{n}{n-1}} \big) \, d \Ha^{\delta_1}_\infty \le b
\]
 for all $u \in C^{1}_0 (\Omega )$ with
$  \int_{\Omega} |\nabla u(y)|^{\delta_2} \, d \Ha^{\delta_2}_\infty \le 1$.
\end{theorem}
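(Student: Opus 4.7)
The plan is to follow the explicit hint in the excerpt: combine the pointwise bound \eqref{pointwise_0} valid for $C^1_0$-functions with the Trudinger-type estimate for Riesz potentials from \cite[Corollary 1.4]{MartinezSpector21}. Since that Trudinger-type estimate is phrased in terms of the classical $L^n$-norm of the input, the first job is to convert the Choquet hypothesis $\int_\Omega |\nabla u|^{\delta_2}\, d\Ha^{\delta_2}_\infty\le 1$ into a bound on $\|\nabla u\|_{L^n(\Omega)}$.

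First I would invoke Proposition~\ref{GeneralizationOV} applied to the nonnegative function $|\nabla u|^n$ with the pair of Hausdorff dimensions $\delta_2 \le n$ (if $\delta_2=n$ the step is trivial; otherwise one uses the proposition with $\delta_1:=\delta_2$ and $\delta_2':=n$). This produces
\[
\Big(\int_\Omega |\nabla u(x)|^n\, d\Ha^n_\infty\Big)^{1/n}
\le \Big(\tfrac{n}{\delta_2}\Big)^{1/n}\Big(\int_\Omega |\nabla u(x)|^{\delta_2}\, d\Ha^{\delta_2}_\infty\Big)^{1/\delta_2}
\le \Big(\tfrac{n}{\delta_2}\Big)^{1/n}.
\]
Then \eqref{basic} converts the left-hand side into a uniform bound on $\|\nabla u\|_{L^n(\Omega)}\le C_1(n,\delta_2)$.

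Second, since $u \in C^1_0(\Omega)$, the pointwise estimate \eqref{pointwise_0} gives $|u(x)| \le c(n)\, I_1|\nabla u|(x)$ for every $x\in\Omega$. Hence for any $a>0$,
\[
\int_\Omega \exp\big(a |u(x)|^{n/(n-1)}\big)\, d\Ha^{\delta_1}_\infty
\le \int_\Omega \exp\big(a\, c(n)^{n/(n-1)}\, (I_1|\nabla u|(x))^{n/(n-1)}\big)\, d\Ha^{\delta_1}_\infty.
\]
Applying \cite[Corollary 1.4]{MartinezSpector21} to the normalized function $|\nabla u|/C_1$, which satisfies $\|\,|\nabla u|/C_1\,\|_{L^n(\Omega)}\le 1$, yields constants $a_0, b_0>0$ (depending on $n,\delta_1$ and $\Omega$) such that
\[
\int_\Omega \exp\big(a_0\, (I_1(|\nabla u|/C_1)(x))^{n/(n-1)}\big)\, d\Ha^{\delta_1}_\infty \le b_0.
\]
Choosing $a := a_0\, C_1^{-n/(n-1)}\, c(n)^{-n/(n-1)}$ and $b := b_0$ then gives the claimed inequality.

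The main technicality, rather than an obstacle, is the bookkeeping around the normalizing constants: one must check that the scaling passed from the Choquet-integral hypothesis through Proposition~\ref{GeneralizationOV} and through the Riesz potential estimate is linear in the relevant homogeneity so that a single pair $(a,b)$ independent of $u$ emerges. Once this is verified, the argument requires no further analytic input beyond the cited results.
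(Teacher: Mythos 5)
Your proposal is correct and follows essentially the same route as the paper, which itself only sketches the argument: combine the pointwise bound \eqref{pointwise_0} with \cite[Corollary 1.4]{MartinezSpector21}, justified because Proposition~\ref{GeneralizationOV} (together with \eqref{basic}) converts the Choquet hypothesis into a uniform $L^n(\Omega)$ bound on $|\nabla u|$. Your careful tracking of the normalization constant $C_1$ and the resulting scaling of $a$ fills in exactly the bookkeeping the paper leaves implicit.
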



\section{Inequalities in the case $p>\delta$}\label{Section:p>delta}

Let us first prove a H\"older continuity result for $C^1$-functions with a finite Choquet integral of the absolute value  
of the gradient with respect to the $\delta$-dimensional  Hausdoff content.

\begin{theorem}\label{Morrey_first}
Let $\delta \in(0, n]$ and  $p\in (\delta,\infty)$.  If $u\in C^1(\R^n)$, then 
\begin{equation}\label{Morrey1}
\sup_{x\neq y \in \Rn}
\frac{\vert u(x)-u(y)\vert }{\vert x-y\vert ^{1-\frac{\delta}{ p}}}
\le c
\Big(
\int_{\Rn} |\nabla u(y)|^{p} \, d \Ha^{\delta}_\infty 
\Big)^{\frac{1}{p}}\,,
\end{equation}
where $c$ is  a constant which  depends only on $\delta$, $n$,  and $p$.
\end{theorem}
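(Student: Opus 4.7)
The plan is to combine the classical Riesz-potential representation of $u$ with a dyadic-annular decomposition, transferring integration from Lebesgue measure to the Choquet integral against $\Ha^\delta_\infty$ via \eqref{basic} and Proposition \ref{GeneralizationOV}. Fix $x\neq y\in\Rn$ and set $r=|x-y|$, $B=B\bigl(\tfrac{x+y}{2},r\bigr)$, so $x,y\in B\subset B(x,2r)\cap B(y,2r)$. A fundamental-theorem-of-calculus argument along rays from $x$, in the spirit of the derivation of (\ref{pointwise}), gives
\[
|u(x)-u_B|\le c(n)\int_{B(x,2r)}\frac{|\nabla u(z)|}{|x-z|^{n-1}}\,dz,
\]
and the analogous estimate with $y$ in place of $x$. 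By the triangle inequality it suffices to prove that for every $w\in\Rn$ and $R>0$,
\[
J(w,R):=\int_{B(w,R)}\frac{|\nabla u(z)|}{|w-z|^{n-1}}\,dz\le cR^{1-\delta/p}\Big(\int_{\Rn}|\nabla u|^{p}\,d\Ha^{\delta}_\infty\Big)^{1/p}.
\]

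To estimate $J(w,R)$ I decompose $B(w,R)\setminus\{w\}=\bigcup_{k\ge 0}A_k$ into the dyadic annuli $A_k=B(w,2^{-k}R)\setminus B(w,2^{-k-1}R)$. On $A_k$ one has $|w-z|\ge 2^{-k-1}R$ and $A_k\subset B(w,2^{-k}R)$, whence
\[
\int_{A_k}\frac{|\nabla u(z)|}{|w-z|^{n-1}}\,dz\le c(2^{-k}R)^{1-n}\int_{A_k}|\nabla u|\,dz,\qquad \Ha^{\delta}_\infty(A_k)\le (2^{-k}R)^{\delta}.
\]
By the comparison \eqref{basic} and Proposition \ref{GeneralizationOV} applied with $(\delta_1,\delta_2)=(\delta,n)$ to $f=|\nabla u|$,
\[
\int_{A_k}|\nabla u|\,dz\le c\int_{A_k}|\nabla u|\,d\Ha^{n}_\infty\le c\Big(\int_{A_k}|\nabla u|^{\delta/n}\,d\Ha^{\delta}_\infty\Big)^{n/\delta}.
\]

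The main obstacle is to pass from the $\delta/n$-th power forced on us by Proposition \ref{GeneralizationOV} to the target $p$-th power. I will do this by a layer-cake splitting of $\int_{A_k}|\nabla u|^{\delta/n}\,d\Ha^{\delta}_\infty=\tfrac{\delta}{n}\int_0^\infty t^{\delta/n-1}\Ha^{\delta}_\infty(\{|\nabla u|>t\}\cap A_k)\,dt$ at a level $t_0>0$. Below $t_0$ I bound the distribution function by $\Ha^{\delta}_\infty(A_k)\le (2^{-k}R)^{\delta}$; above $t_0$ I use the Chebyshev-type bound $\Ha^{\delta}_\infty(\{|\nabla u|>t\})\le t^{-p}\int|\nabla u|^p\,d\Ha^{\delta}_\infty$, which follows from the monotonicity of the distribution function and the definition of the Choquet integral. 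Since $p>\delta\ge\delta/n$ (the latter because $n\ge 2$), the upper integral converges, and optimizing in $t_0$ gives
\[
\int_{A_k}|\nabla u|^{\delta/n}\,d\Ha^{\delta}_\infty\le \frac{p}{p-\delta/n}\Big(\int|\nabla u|^p\,d\Ha^{\delta}_\infty\Big)^{\delta/(np)}\Ha^{\delta}_\infty(A_k)^{1-\delta/(np)}.
\]
Raising to the $n/\delta$ power and using $\Ha^{\delta}_\infty(A_k)^{n/\delta-1/p}\le (2^{-k}R)^{n-\delta/p}$ yields $\int_{A_k}|\nabla u|\,dz\le c(2^{-k}R)^{n-\delta/p}\bigl(\int|\nabla u|^p\,d\Ha^{\delta}_\infty\bigr)^{1/p}$, and therefore
\[
\int_{A_k}\frac{|\nabla u|}{|w-z|^{n-1}}\,dz\le c(2^{-k}R)^{1-\delta/p}\Big(\int|\nabla u|^{p}\,d\Ha^{\delta}_\infty\Big)^{1/p}.
\]

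The summation in $k$ uses the hypothesis $p>\delta$ decisively: the exponent $1-\delta/p$ is strictly positive, so the geometric series $\sum_{k\ge 0}2^{-k(1-\delta/p)}$ converges and yields $J(w,R)\le cR^{1-\delta/p}\bigl(\int|\nabla u|^p\,d\Ha^{\delta}_\infty\bigr)^{1/p}$. Taking $R=2r$ and combining with the analogous estimate centered at $y$ produces $|u(x)-u(y)|\le c|x-y|^{1-\delta/p}\bigl(\int_{\Rn}|\nabla u|^p\,d\Ha^{\delta}_\infty\bigr)^{1/p}$, which after dividing by $|x-y|^{1-\delta/p}$ and taking the supremum over $x\neq y$ is precisely (\ref{Morrey1}).
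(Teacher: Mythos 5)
Your argument is correct, but it is a genuinely different route from the paper's. Both proofs begin the same way: the triangle inequality together with the classical pointwise estimate for $C^1$ functions reduces the claim to showing that the truncated Riesz potential $\int_{B(w,R)}|\nabla u(z)|\,|w-z|^{1-n}\,dz$ is bounded by $cR^{1-\delta/p}\bigl(\int_{\Rn}|\nabla u|^{p}\,d\Ha^{\delta}_\infty\bigr)^{1/p}$. The paper then applies H\"older's inequality to the \emph{Lebesgue} integral with exponent pair $\bigl(\tfrac{np}{\delta},\tfrac{np}{np-\delta}\bigr)$: the radial factor $\bigl(\int_{B(w,R)}|w-z|^{\frac{np(1-n)}{np-\delta}}\,dz\bigr)^{(np-\delta)/(np)}$ is computed directly --- this is where $p>\delta$ enters, making the singular exponent integrable --- and the other factor $\bigl(\int_{B(w,R)}|\nabla u|^{np/\delta}\,dz\bigr)^{\delta/(np)}$ is converted to $\bigl(\int|\nabla u|^{p}\,d\Ha^{\delta}_\infty\bigr)^{1/p}$ in one stroke via \eqref{basic} and Proposition~\ref{GeneralizationOV}. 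You instead pass to Choquet integration at the outset, decompose into dyadic annuli, and prove the needed H\"older-type bound for Choquet integrals by hand through a layer-cake split and a Chebyshev estimate for $\Ha^{\delta}_\infty$; here $p>\delta$ enters through the convergence of the geometric series $\sum_k 2^{-k(1-\delta/p)}$. The paper's argument is shorter because the H\"older step takes place in the Lebesgue setting, where it is immediate; yours is longer but more intrinsic to the Choquet framework, in that it effectively establishes the auxiliary inequality $\int_{A}|f|^{\delta/n}\,d\Ha^{\delta}_\infty\le\tfrac{p}{p-\delta/n}\bigl(\int|f|^{p}\,d\Ha^{\delta}_\infty\bigr)^{\delta/(np)}\Ha^{\delta}_\infty(A)^{1-\delta/(np)}$ along the way. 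Both approaches invoke Proposition~\ref{GeneralizationOV}, which assumes $\delta_1<\delta_2$ strictly, so both implicitly require the same trivial adjustment when $\delta=n$.
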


\begin{proof}
We may assume that the right-hand side of  (\ref{Morrey1})  is finite. We follow  some parts of \cite[5.6.2]{E} in the proof.
Let us pick up two points $x$ and $y$ in $\Rn$ and write $r=\vert x- y \vert$.
Then by the triangle inequality
\begin{align*}
&\vert u(x)-u(y)\vert\le
\frac{1}{\vert B(x,r)\cap B(y,r)\vert}
\int_{B(x,r)\cap B(y,r)}
\vert u(x)-u(v)\vert\,dv\\
&+\frac{1}{\vert B(x,r)\cap B(y,r)\vert}
\int_{B(x,r)\cap B(y,r)}
\vert u(y)-u(v)\vert\,dv\\
&\le
\frac{c(n)}{\vert B(x,r)\vert}
\int_{B(x,r)}
\vert u(x)-u(v)\vert\,dv
+
\frac{c(n)}{\vert B(y,r)\vert}
\int_{B(y,r)}
\vert u(y)-u(v)\vert\,dv\,.
\end{align*}
We use the well-known inequality for $C^1$-functions \cite[Lemma 4.1]{EG}: 
\begin{equation*}
\frac{1}{\vert B(x,r)\vert}
\int_{B(x,r)}
\vert u(y)-u(x)\vert\,dy
\le c(n)
\int_{B(x,r)}
\frac{\vert \nabla u(y)\vert}{\vert x -y\vert ^{n-1}}\,dy\,.
\end{equation*}
Hence,  using  the H\"older inequality with $\Big(\frac{np}{\delta},\frac{np}{np-\delta}\Big)$ and
Proposition \ref{GeneralizationOV}
gives
\begin{align*}
&\int_{B(x,r)}
\frac{\vert \nabla u(y)\vert}{\vert x -y\vert ^{n-1}}\,dy
\le
\Big(\int_{B(x,r)}\vert\nabla u(y)\vert^{\frac{np}{\delta}}\,dy\Big)^{\frac{\delta}{np}}
\Big(\int_{B(x,r)} \vert x-y\vert^{\frac{np(1-n)}{np-\delta}}\,dy\Big)^{\frac{np-\delta}{np}}\\
& \le c(\delta , n , p)
\Big(\int_{B(x,r)} |\nabla u(y)|^{p} \, d \Ha^{\delta}_\infty \Big)^{\frac1p} r^{\frac{p-\delta}{p}}\,,
\end{align*}
where the assumption $p>\delta$ is used when the integral of $\vert x-y\vert^{\frac{np(1-n)}{np-\delta}}$ 
over a ball $B(x,r)$  with respect to $y$ is calculated.
Combining the above estimates yields the claim.
\end{proof}

The following theorem gives an upper bound for a sup-norm of $C^1$-functions in $\Rn$.

\begin{theorem}\label{Morrey_second}
Let $u\in C^1(\Rn )$.  If $\delta \in(0, n]$ and  $p\in (\delta,\infty)$, then
\begin{equation}\label{Morrey2}
\sup_{x\in\Rn}\vert u(x) \vert 
\le c
\bigg(
\Big(
\int_{\Rn} |u(y)|^{p} \, d \Ha^{\delta}_\infty 
\Big)^{\frac{1}{p}}
+
\Big(
\int_{\Rn} |\nabla u(y)|^{p} \, d \Ha^{\delta}_\infty 
\Big)^{\frac{1}{p}}
\biggr)\,,
\end{equation}
where $c$ is a constant which depends only on $\delta$, $n$, and $p$.
\end{theorem}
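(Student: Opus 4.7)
The plan is to adapt the argument of Theorem~\ref{Morrey_first} by anchoring the estimate at a single point and adding a zero-order term to handle the sup-norm of $u$ itself. For $x\in\R^n$ fixed, the triangle inequality applied to the identity $u(x) = |B(x,1)|^{-1}\int_{B(x,1)} u(x)\,dy$ yields
\[
|u(x)| \le \frac{1}{|B(x,1)|}\int_{B(x,1)} |u(x)-u(y)|\, dy + \frac{1}{|B(x,1)|}\int_{B(x,1)} |u(y)|\, dy,
\]
and I would bound the two terms separately by the two summands on the right-hand side of \eqref{Morrey2}.

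The first term is treated essentially as in the proof of Theorem~\ref{Morrey_first}: the $C^1$ mean oscillation inequality \cite[Lemma 4.1]{EG} majorizes it by $c(n)\int_{B(x,1)}|\nabla u(y)||x-y|^{1-n}\,dy$; H\"older's inequality with exponents $(np/\delta,\, np/(np-\delta))$ handles this integral, the assumption $p>\delta$ being precisely what makes the weight $|x-y|^{-(n-1)np/(np-\delta)}$ integrable on $B(x,1)$; the resulting Lebesgue integral $\int_{B(x,1)}|\nabla u|^{np/\delta}\,dy$ is then converted to a Choquet integral via \eqref{basic}; and finally Proposition~\ref{GeneralizationOV} with $\delta_1=\delta$, $\delta_2=n$ applied to $f=|\nabla u|^{np/\delta}$ (noting that $f^{\delta/n}=|\nabla u|^p$) upgrades $\mathcal{H}^n_\infty$ to $\mathcal{H}^\delta_\infty$, producing the bound $c\bigl(\int_{\R^n}|\nabla u|^p\,d\mathcal{H}^\delta_\infty\bigr)^{1/p}$.

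For the second term, H\"older's inequality on $B(x,1)$ with the same exponent pair $(np/\delta,\, np/(np-\delta))$ produces
\[
\frac{1}{|B(x,1)|}\int_{B(x,1)}|u(y)|\,dy \le c(n)\Bigl(\int_{B(x,1)}|u(y)|^{np/\delta}\,dy\Bigr)^{\delta/(np)},
\]
and the same two-step conversion---\eqref{basic} followed by Proposition~\ref{GeneralizationOV} applied now to $f=|u|^{np/\delta}$---upgrades this to $c\bigl(\int_{\R^n}|u|^p\,d\mathcal{H}^\delta_\infty\bigr)^{1/p}$. Adding the two estimates yields \eqref{Morrey2}. The only delicate point is bookkeeping the exponents so that the factor $\delta/n$ introduced by Proposition~\ref{GeneralizationOV} cancels the $n/\delta$ produced by H\"older, leaving exactly the exponent $p$ on the right-hand side; no serious obstacle arises beyond this check, and the role of the hypothesis $p>\delta$ is confined to the integrability of the Riesz kernel in the first estimate.
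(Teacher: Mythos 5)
Your proof is correct and follows essentially the same route as the paper's own: anchor at the ball $B(x,1)$, split $|u(x)|$ by the triangle inequality into an oscillation term and an average term, estimate the oscillation exactly as in Theorem~\ref{Morrey_first}, and treat the average by H\"older with exponents $(np/\delta,\,np/(np-\delta))$ followed by the conversion $L^{np/\delta} \to \mathcal{H}^\delta_\infty$ via \eqref{basic} and Proposition~\ref{GeneralizationOV}. The exponent bookkeeping you flag does work out as you claim, so nothing is missing.
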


Although the proof follows  \cite[5.6.2 Theorem 4]{E} of the classical case, we present the proof for the convenience of the reader.

\begin{proof}
We may assume that the integrals on the right-hand side  of (\ref{Morrey2}) are finite.
Let $x\in\Rn$ be fixed. 
By the triangle inequality
\begin{equation*}
\vert u(x)\vert\le
\frac{1}{\vert B(x,1)\vert}
\int_{B(x,1)}
\vert u(x)-u(y)\vert\,dy
+
\frac{1}{\vert B(x,1)\vert}
\int_{B(x,1)}
\vert u(y)\vert dy\,.
\end{equation*}
The first term on the right-hand side can be estimated as in the proof of Theorem \ref{Morrey_first}.
For the second term on the left we use the H\"older inequality with
$\Big(\frac{np}{\delta},\frac{np}{np-\delta}\Big)$ and
Proposition \ref{GeneralizationOV} 
\begin{align*}
&\frac{1}{\vert B(x,1)\vert}
\int_{B(x,1)}
\vert u(y)\vert dy\le
\frac{1}{\vert B(x,1)\vert}
\Big(\int_{B(x,1)}
\vert u(y)\vert^{\frac{np}{\delta}} \,dy\Big)^{\frac{\delta}{np}}
\vert B(x,1)\vert ^{1-\frac{\delta}{np}}\\
&\le c(\delta , n, p)
\Big(
\int_{\Rn} |u(y)|^{p} \, d \Ha^{\delta}_\infty 
\Big)^{\frac{1}{p}}\,.
\end{align*}
Combining the estimates implies the claim.
\end{proof}

Theorem \ref{Morrey_first} and  Theorem \ref{Morrey_second} imply a Morrey inequality for $C^1$ functions in $\Rn$.

\begin{corollary}\label{Morrey_third}
Let $u\in C^1(\Rn )$, $\delta \in(0, n]$, and  $p\in (\delta,\infty)$.
Then there exists a constant  $c$ depending only on $\delta$, $n$, and $p$ such that
\begin{equation*}
\sup_{x\in\Rn}\vert u(x) \vert 
+
\sup_{x\neq y \in\Rn}
\frac{\vert u(x)-u(y)\vert }{\vert x-y\vert ^{1-\frac{\delta}{p}}}
\le c
\bigg(
\Big(
\int_{\Rn} |u(y)|^{p} \, d \Ha^{\delta}_\infty 
\Big)^{\frac{1}{p}}
+
\Big(
\int_{\Rn} |\nabla u(y)|^{p} \, d \Ha^{\delta}_\infty 
\Big)^{\frac{1}{p}}
\biggr).
\end{equation*}
\end{corollary}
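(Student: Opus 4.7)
The plan is direct: the two summands on the left-hand side of the corollary are separately controlled by Theorem~\ref{Morrey_first} and Theorem~\ref{Morrey_second}, respectively, and the sum of those two estimates already yields the stated inequality. There is essentially no new analytic content beyond what has just been established.

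More precisely, I would first invoke Theorem~\ref{Morrey_second} with the same $\delta$ and $p$ to bound the sup-norm $\sup_{x\in\Rn}|u(x)|$ by $c_1(\delta,n,p)$ times the full right-hand side, namely
\[
\Bigl(\int_\Rn|u(y)|^{p}\,d\Ha^{\delta}_\infty\Bigr)^{1/p} + \Bigl(\int_\Rn|\nabla u(y)|^{p}\,d\Ha^{\delta}_\infty\Bigr)^{1/p}.
\]
Second, I would invoke Theorem~\ref{Morrey_first} to bound the H\"older seminorm $\sup_{x\ne y}|u(x)-u(y)|/|x-y|^{1-\delta/p}$ by $c_2(\delta,n,p)$ times $\bigl(\int_\Rn|\nabla u(y)|^p\,d\Ha^{\delta}_\infty\bigr)^{1/p}$, which is in turn dominated by the displayed expression above. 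Adding the two inequalities gives the claim with $c = c_1 + c_2$, a constant depending only on $\delta$, $n$, and $p$.

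The only verification step is that the hypotheses $u\in C^1(\Rn)$, $\delta\in(0,n]$, and $p\in(\delta,\infty)$ match exactly those required by both Theorem~\ref{Morrey_first} and Theorem~\ref{Morrey_second}, which they do. There is no substantive obstacle; the real work has already been carried out in the proofs of those two theorems, in particular in the pointwise averaging arguments combined with H\"older's inequality and Proposition~\ref{GeneralizationOV}. If either Choquet integral on the right-hand side is infinite, the inequality is vacuous, so we may assume both are finite, matching the reductions used in the proofs of Theorems~\ref{Morrey_first} and~\ref{Morrey_second}.
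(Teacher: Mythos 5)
Your proposal is correct and is exactly the argument the paper intends: the corollary is stated as an immediate consequence of Theorems~\ref{Morrey_first} and~\ref{Morrey_second}, obtained by adding the two estimates. Nothing further is needed.
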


Using the well-known Morrey inequality
\cite[5.6.2]{E}  and Proposition \ref{GeneralizationOV} with $0< \delta \le n < q$ implies that
\begin{align*}
\vert u(x)-u(y)\vert 
& \le c_1 r\Big(\frac{1}{\vert B(x,2r)\vert}\int_{B(x,2r)}\vert \nabla u(v)\vert ^q\, d\Ha^{n}_\infty\Big)^{\frac{1}{q}}\\
&\le c_2 r\Big(\frac{1}{\vert B(x,2r)\vert^{\frac{\delta}{n}}}\int_{B(x,2r)}\vert \nabla u(v)\vert ^{\frac{q\delta}{n}}\, d\Ha^{\delta}_\infty\Big)^{\frac{n}{q\delta}}\,.
\end{align*}
Note that here $\frac{q\delta}{n} \in(\delta, \infty)$ whenever $q \in(n, \infty)$.
Writing $p=\frac{q\delta}{n}$ yields the following theorem, which also follows
from the proof of Theorem \ref{Morrey_first}.

\begin{theorem}\label{Morrey_two_parts}  
Suppose that $u\in C^1(B(x,2r) )$.
If 
$\delta \in(0, n]$ and  $p\in (\delta,\infty)$,  then for every
 $y\in B(x,r)$
\begin{equation*}
\vert u(x)-u(y)\vert \le c r\Big(\frac{1}{\vert B(x,2r)\vert^{\frac{\delta}{n}}}\int_{B(x,2r)}\vert \nabla u(v)\vert ^p\, d\Ha^{\delta}_\infty\Big)^{\frac{1}{p}}\,,
\end{equation*}
where a constant  $c$ depends only on $\delta$, $n$, and $p$.
\end{theorem}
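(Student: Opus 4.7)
The plan is to follow the route sketched in the paragraph preceding the statement: apply the classical Morrey inequality on $B(x,2r)$ and then convert the resulting Lebesgue integral into a Choquet integral with respect to $\Ha^\delta_\infty$ via Proposition~\ref{GeneralizationOV}. Fix $y \in B(x,r)$ and set $q := np/\delta$. Since the hypothesis $p > \delta$ is equivalent to $q > n$, the classical Morrey inequality applies and yields
\[
|u(x) - u(y)| \le c(n,\delta,p)\, r\, \Bigl(\frac{1}{|B(x,2r)|} \int_{B(x,2r)} |\nabla u(v)|^{q}\,dv\Bigr)^{1/q}.
\]

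The remaining step is to bound the Lebesgue integral of $|\nabla u|^{q}$ by a Choquet integral with respect to $\Ha^\delta_\infty$. First I would use the equivalence~\eqref{basic} to pass to a Choquet integral with respect to $\Ha^n_\infty$, obtaining $\int_{B(x,2r)} |\nabla u|^{q}\,dv \le c(n) \int_{B(x,2r)} |\nabla u|^{q}\, d\Ha^n_\infty$. Then Proposition~\ref{GeneralizationOV} with $\delta_1 = \delta$ and $\delta_2 = n$ applied to the nonnegative function $|\nabla u|^{q} = |\nabla u|^{np/\delta}$ gives
\[
\Bigl(\int_{B(x,2r)} |\nabla u|^{np/\delta}\, d\Ha^n_\infty\Bigr)^{1/n} \le c(n,\delta)\Bigl(\int_{B(x,2r)} |\nabla u|^{p}\, d\Ha^\delta_\infty\Bigr)^{1/\delta}.
\]
Raising both sides to the power $\delta/(np)=1/q$ and inserting this back, together with the elementary identity $|B(x,2r)|^{1/q} = c(n)\,r^{\delta/p}$, produces precisely
\[
|u(x) - u(y)| \le c\, r \Bigl(\frac{1}{|B(x,2r)|^{\delta/n}} \int_{B(x,2r)} |\nabla u(v)|^{p}\, d\Ha^\delta_\infty\Bigr)^{1/p}.
\]

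There is no deep obstacle here; the argument is essentially bookkeeping in the exponents. The hypothesis $p > \delta$ plays a dual role: it is what makes the classical Morrey inequality applicable (through $q > n$), and it also places Proposition~\ref{GeneralizationOV} in the regime $\delta_1 < \delta_2$ in which it produces the inequality in the direction we need. The alternative route mentioned in the statement — imitating the proof of Theorem~\ref{Morrey_first} localised on $B(x,2r)$, with H\"older's inequality applied to the conjugate pair $(np/\delta,\, np/(np-\delta))$ and Proposition~\ref{GeneralizationOV} invoked on the resulting $L^{np/\delta}$-norm — leads to the same bound, the factor $r^{1-\delta/p}$ arising naturally from the singular integral of $|x-v|^{np(1-n)/(np-\delta)}$ in polar coordinates and then recast as $c(n)\,r\,|B(x,2r)|^{-\delta/(np)}$.
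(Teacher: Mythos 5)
Your argument reproduces the paper's own derivation: apply the classical Morrey inequality with exponent $q=np/\delta>n$, pass from the Lebesgue integral to a Choquet integral against $\Ha^n_\infty$ via \eqref{basic}, and then invoke Proposition~\ref{GeneralizationOV} with $\delta_1=\delta$, $\delta_2=n$ to descend to $\Ha^\delta_\infty$; the exponent bookkeeping (in particular $1/q=\delta/(np)$ and $n/(\delta q)=1/p$) is handled correctly. This matches the paper's sketch preceding the theorem, so no further comment is needed.
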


Next we consider functions which belong to $C^1_0$.
We show  that  the classical $L^p$-limiting equality  holds for Choquet integrals case also.

\begin{lemma}\label{lem:Linfty}
Let $\Omega $ be a bounded set  in $\Rn$ and $\delta \in(0, n]$.
Then for every function $f:\Omega \to [-\infty, \infty]$  
\[
\lim_{p \to \infty} \Big( \int_{\Omega}|f|^p\,d\Ha_{\infty}^{\delta}\Big)^{\frac 1p}
= \Ha_{\infty}^{\delta}\text{-}\!\esssup_{x \in \Omega} |f|.
\]
\end{lemma}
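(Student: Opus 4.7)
The plan is to set $M := \Ha_\infty^\delta\text{-}\esssup_{x\in\Omega}|f|$, understood as $\inf\{a\ge 0 : \Ha_\infty^\delta(\{|f|>a\})=0\}$, and to prove $\limsup_{p\to\infty} \bigl(\int_\Omega|f|^p\,d\Ha_\infty^\delta\bigr)^{1/p} \le M$ and $\liminf_{p\to\infty} \bigl(\int_\Omega|f|^p\,d\Ha_\infty^\delta\bigr)^{1/p} \ge M$ separately, following the classical template for the $L^p \to L^\infty$ limit. Both directions will come directly from the layer-cake formula \eqref{IntegralDef} combined with the monotonicity of $\Ha_\infty^\delta$. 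Since $\Omega$ is bounded, $\Ha_\infty^\delta(\Omega) < \infty$, and the trivial subcase $\Ha_\infty^\delta(\Omega) = 0$ can be discarded at the outset, since then both sides vanish.

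For the upper bound, the characterisation of $M$ as the infimum of those $a$ with $\Ha_\infty^\delta(\{|f| > a\}) = 0$, together with monotonicity of the content, shows that $\Ha_\infty^\delta(\{|f|^p > t\}) = 0$ for every $t > M^p$. Hence the Choquet integral of $|f|^p$ reduces to an integral over $[0,M^p]$ of an integrand bounded by $\Ha_\infty^\delta(\Omega)$, giving $\bigl(\int_\Omega|f|^p\,d\Ha_\infty^\delta\bigr)^{1/p} \le M\,\Ha_\infty^\delta(\Omega)^{1/p}$. Letting $p\to\infty$ and using $\Ha_\infty^\delta(\Omega)^{1/p} \to 1$ yields the $\limsup$ inequality.

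For the lower bound, fix $\epsilon\in(0,M)$ and set $E_\epsilon := \{x\in\Omega : |f(x)| > M-\epsilon\}$. By the very definition of $M$, $\Ha_\infty^\delta(E_\epsilon) > 0$. Restricting the layer-cake integral in \eqref{IntegralDef} to $[0,(M-\epsilon)^p]$ and using that $\{|f|^p>t\}\supseteq E_\epsilon$ on that range, one obtains the bound $\bigl(\int_\Omega|f|^p\,d\Ha_\infty^\delta\bigr)^{1/p}\ge (M-\epsilon)\,\Ha_\infty^\delta(E_\epsilon)^{1/p}$. Sending $p\to\infty$ and then $\epsilon\to 0^+$ yields $\liminf \ge M$.

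The only subtlety is the bookkeeping for the degenerate cases. If $M = 0$ the upper bound gives the claim at once. If $M = \infty$ the lower bound argument is rerun with $E_N := \{|f|>N\}$ for any $N>0$ in place of $E_\epsilon$, producing a lower bound that tends to $N$ and hence, as $N$ is arbitrary, to $+\infty$. There is no real obstacle here: the proof is nothing more than the layer-cake formula \eqref{IntegralDef} plus monotonicity of $\Ha_\infty^\delta$, run exactly as in the measure-theoretic proof, and the only feature special to Choquet integrals is the absence of countable additivity, which is not used anywhere in the argument.
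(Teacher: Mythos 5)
Your proof is correct and follows essentially the same strategy as the paper's: split into $\limsup$ and $\liminf$, use the layer-cake formula with monotonicity of $\Ha_\infty^\delta$, and let $p\to\infty$ with $\Ha_\infty^\delta(\Omega)^{1/p}\to 1$ (resp.\ $\Ha_\infty^\delta(E_\epsilon)^{1/p}\to 1$) before sending $\epsilon\to 0^+$. The only cosmetic difference is that the paper normalizes by $M$ for the upper bound rather than truncating the layer-cake integral at $M^p$, and it simply assumes $M>0$ at the outset rather than spelling out the degenerate cases $M=0$, $M=\infty$, and $\Ha_\infty^\delta(\Omega)=0$ as you do.
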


\begin{proof}
Let us write $M:= \Ha_{\infty}^{\delta}\text{-}\!\esssup_{x \in \Omega} |f|$. We may assume that $M>0$.
Let $\delta >0$ and let $S_\delta:= \{x \in \Omega : f(x) > M- \delta\}$. Then $\Ha_{\infty}^{\delta}(S_\delta) >0$. Hence we obtain
\[
 \Big(\int_{\Omega}|f|^p\,d\Ha_{\infty}^{\delta}\Big)^{\frac 1p} \ge \big(\Ha_{\infty}^{\delta}(S_\delta)\big)^{\frac 1p} (M-\delta).
\]
Letting  $p \to \infty$  first  and then $\delta \to 0^+$ implies
\[
\liminf_{p \to \infty} \Big( \int_{\Omega}|f|^p\,d\Ha_{\infty}^{\delta}\Big)^{\frac 1p} \ge M.
\]
For a function $f/M$ we obtain
\[
\Big( \int_{\Omega}\Big(\frac{|f|}{M}\Big)^p\,d\Ha_{\infty}^{\delta}\Big)^{\frac 1p}  \le \Big( \int_{\Omega} d\Ha_{\infty}^{\delta}\Big)^{\frac 1p} 
\le \Ha_{\infty}^{\delta}(\Omega)^{\frac1p}.
\]
Letting  $p \to \infty$ yields that
\[
\limsup_{p \to \infty}\Big( \int_{\Omega}\Big(\frac{|f|}{M}\Big)^p\,d\Ha_{\infty}^{\delta}\Big)^{\frac 1p}  \le 1, 
\]
that is  $\limsup_{p \to \infty}\Big( \int_{\Omega}|f|^p\,d\Ha_{\infty}^{\delta}\Big)^{\frac 1p}  \le M$. 
\end{proof}

The following  result gives an embedding from $C^1_0(\Omega)$ to $L^\infty(\Omega, \Ha_{\infty}^{\delta})$.

\begin{theorem}\label{esssupineq}
Let $\Omega$ be a bounded set in $\Rn$.  If  $\delta \in(1, n]$ and  $p\in (\delta,\infty)$, then the inequality
\[
\Ha_{\infty}^{\delta}\text{-}\!\esssup_{x \in \Omega} |u| \le c \Big(\int_\Omega |\nabla u|^p  \, d\Ha_{\infty}^{\delta}\Big)^{\frac1p}
\]
holds
for all $u \in C^1_0(\Omega)$, where the constant $c$ depends only $n$, $\delta$, $p$, and $\Ha_{\infty}^{\delta} (\Omega)$.
\end{theorem}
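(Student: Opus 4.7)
The plan is to reduce the inequality to the classical Sobolev--Morrey embedding on $W^{1,q}_0(\Omega)$ for a suitably chosen $q>n$. First I would apply Proposition~\ref{GeneralizationOV} with $\delta_1=\delta$ and $\delta_2=n$ (the case $\delta=n$ being immediate from \eqref{basic}) to the non-negative function $|\nabla u|^{np/\delta}$; combined with \eqref{basic} this converts the Choquet integral on the right-hand side of the desired inequality into an ordinary Lebesgue integral at the higher exponent $q:=np/\delta$, giving
\[
\Big(\int_\Omega |\nabla u(y)|^{q}\,dy\Big)^{1/q}\le c(n,\delta,p)\Big(\int_\Omega |\nabla u(y)|^{p}\,d\Ha_\infty^{\delta}\Big)^{1/p}.
\]
Because $p>\delta$, the exponent $q=np/\delta$ exceeds $n$, so I would next invoke the classical Morrey embedding in its sharp \emph{volume} form, $\|u\|_{L^\infty(\Omega)}\le c(n,q)\,|\Omega|^{1/n-1/q}\|\nabla u\|_{L^q(\Omega)}$, valid on $W^{1,q}_0(\Omega)\supset C^1_0(\Omega)$ and obtained via Schwarz symmetrization and the P\'olya--Szeg\H{o} principle.

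Chaining these two displays produces an $L^\infty$-bound for $u$ involving the factor $|\Omega|^{1/n-1/q}=|\Omega|^{(p-\delta)/(np)}$ and the Choquet $L^p$-norm of $\nabla u$. To convert $|\Omega|$ into $\Ha_\infty^\delta(\Omega)$ as the statement requires, I would use the set inequality $\Ha_\infty^n(\Omega)^{1/n}\le\Ha_\infty^\delta(\Omega)^{1/\delta}$ established inside the proof of Proposition~\ref{GeneralizationOV}, together with \eqref{basic}: this yields $|\Omega|\le c(n,\delta)\,\Ha_\infty^\delta(\Omega)^{n/\delta}$ and hence $|\Omega|^{(p-\delta)/(np)}\le c\,\Ha_\infty^\delta(\Omega)^{1/\delta-1/p}$, which is exactly the dependence asserted in the theorem. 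Finally, to pass from the pointwise supremum to the $\Ha_\infty^\delta$-essential supremum I would use continuity of $u$: every super-level set $\{|u|>M\}$ with $M<\|u\|_{L^\infty(\Omega)}$ is open and non-empty, hence has strictly positive $\delta$-dimensional Hausdorff content, so the two suprema coincide. Alternatively, Lemma~\ref{lem:Linfty} delivers the same conclusion by letting $q\to\infty$.

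The most delicate step is securing the sharp \emph{volume} form $|\Omega|^{1/n-1/q}$ of classical Morrey rather than the weaker \emph{diameter} form $\diam(\Omega)^{1-n/q}$: only the volume form converts cleanly, via Proposition~\ref{GeneralizationOV}, into a constant depending on $\Ha_\infty^\delta(\Omega)$, because the diameter of a bounded set is not controlled by its Hausdorff content in general. Fortunately the symmetrization argument is applied only after the problem has been reduced to a Lebesgue integral, where Schwarz rearrangement is a standard tool and no Choquet-compatible analogue is required.
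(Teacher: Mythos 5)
Your proposal is correct, but it takes a genuinely different route from the paper's. The paper's proof is a Moser-type iteration carried out entirely inside the Choquet framework, following \cite[Theorem 7.10]{Gilbarg-Trudinger}: it starts from the $p=1$ Poincar\'e--Sobolev inequality of Theorem~\ref{thm:Poincare-0} (this is where the hypothesis $\delta>1$ enters, since that theorem needs $\delta/n<1<\delta$), applies it to $|u|^\gamma$, uses H\"older for Choquet integrals, iterates over $\gamma=r^k$ with $r=\delta'/p'$, and closes the loop with Lemma~\ref{lem:Linfty}. You instead dispatch the Choquet integral at the outset: applying Proposition~\ref{GeneralizationOV} with $\delta_1=\delta$, $\delta_2=n$ to $|\nabla u|^{np/\delta}$ and then \eqref{basic} converts $\big(\int_\Omega|\nabla u|^p\,d\Ha_\infty^\delta\big)^{1/p}$ into a bound on $\|\nabla u\|_{L^q(\Omega)}$ with $q=np/\delta>n$, after which the classical Morrey embedding on $W^{1,q}_0$ in its volume form finishes the job. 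Your observation that the volume form (as opposed to the diameter form) is essential is apt, since the diameter of a thin open set is not controlled by its $\delta$-content for $\delta>1$; and your final step identifying $\Ha_\infty^\delta$-esssup with the pointwise sup for continuous $u$ (every nonempty open super-level set has positive $\delta$-content) is clean. A notable payoff of your route is that it never invokes the $p=1$ Sobolev step, so it delivers the inequality for the full range $\delta\in(0,n]$ with $p>\delta$, whereas the paper's iteration argument needs $\delta>1$. The paper's approach, on the other hand, stays entirely within the Choquet/Hausdorff-content machinery and would adapt more readily to set functions for which no Lebesgue comparison like \eqref{basic} is available.
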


\begin{proof}
We follow the proof of \cite[Theorem 7.10]{Gilbarg-Trudinger}.
 Let  $\delta \in (0,n)$.
Since $\frac\delta{n}<p=1<\delta$, Theorem~\ref{thm:Poincare-0}
yields
\begin{equation}\label{equ:SP-0}
\Big(\int_\Omega |u|^{\frac{\delta}{\delta-1}}  \, d\Ha_{\infty}^{\delta}\Big)^{\frac{\delta -1}{\delta}}
\le c \int_\Omega |\nabla u|  \, d\Ha_{\infty}^{\delta}
\end{equation}
for all $u \in C^1_0(\Omega)$.
If $\delta =n$, then inequality \eqref{equ:SP-0} is known \cite[Theorem 2.4.1]{Ziemer}.
We apply   this  inequality \eqref{equ:SP-0} to the function  $u^\gamma$ with $\gamma >1$ and use Hölder's inequality 
with $(p, {p}/(p-1)=:p')$ 
to obtain
\[
\begin{split}
\Big(\int_\Omega |u|^{\frac{\gamma\delta }{\delta-1}}  \, d\Ha_{\infty}^{\delta}\Big)^{\frac{\delta -1}{\delta}}
&\le c \gamma \int_\Omega |u|^{\gamma -1}|\nabla u|  \, d\Ha_{\infty}^{\delta}\\
&\le 2c \gamma  \Big(\int_\Omega |u|^{p'(\gamma -1)}  \, d\Ha_{\infty}^{\delta}\Big)^{\frac1{p'}} \Big(\int_\Omega |\nabla u|^p  \, d\Ha_{\infty}^{\delta}\Big)^{\frac1p}. 
\end{split}
\] 
Let  $m:= \max\big\{1, \Ha_{\infty}^{\delta} (\Omega)\big\}$. Applying the above inequality to
\[
\tilde u := \frac{u}{4c m \Big(\int_\Omega |\nabla u|^p  \, d\Ha_{\infty}^{\delta}\Big)^{\frac1p}}
\] 
implies that
\[
\Big(\int_\Omega |\tilde u|^{\frac{\gamma\delta }{\delta-1}}  \, d\Ha_{\infty}^{\delta}\Big)^{\frac{\delta -1}{\delta}}
\le  \frac{\gamma}{2m}  \Big(\int_\Omega |\tilde u|^{p'(\gamma -1)}  \, d\Ha_{\infty}^{\delta}\Big)^{\frac1{p'}}.
\] 
Thus by Hölder's inequality
\[
\begin{split}
\Big(\int_\Omega |\tilde u|^{\frac{\gamma\delta }{\delta-1}}  \, d\Ha_{\infty}^{\delta}\Big)^{\frac{\delta -1}{\gamma\delta}}
&\le  \Big(\frac{\gamma}{2m}\Big)^{\frac1\gamma}  \Big(\int_\Omega |\tilde u|^{p'(\gamma -1)}  \, d\Ha_{\infty}^{\delta}\Big)^{\frac1{\gamma p'}}\\
&\le  \Big(\frac{\gamma}{2m}\Big)^{\frac1\gamma}  \bigg(2 \Big(\int_\Omega |\tilde u|^{p'\gamma}  \, d\Ha_{\infty}^{\delta} \Big)^{\frac{\gamma -1}{\gamma}} \Ha_{\infty}^{\delta} (\Omega)^{\frac1\gamma} \bigg)^{\frac1{\gamma p'}}\\
&=  \gamma^{\frac1\gamma}   \Big( \int_\Omega |\tilde u|^{p'\gamma}  \, d\Ha_{\infty}^{\delta} \Big)^{\frac{\gamma-1}{\gamma^2 p'}}.
\end{split}
\] 
 Let us write $\delta ':=\delta/(\delta -1)$,  $r: = \frac{\delta'}{p'}$. Note that $r>1$, since $p>\delta >1$. We apply the above inequality for $\gamma = r^k$, and iterate it from $1$ to $k$  
and use  \eqref{equ:SP-0}    to obtain
\[
\begin{split}
\Big(\int_\Omega |\tilde u|^{\frac{r^k\delta }{\delta-1}}  \, d\Ha_{\infty}^{\delta}\Big)^{\frac{\delta -1}{r^k\delta}}
\le r^{\sum_{k=1}^\infty k r^{-k}} \Big(\int_\Omega |\tilde u|^{\delta'}  \, d\Ha_{\infty}^{\delta}\Big)^{\frac{\delta'-p'}{\delta'^2}}
\le r^{\sum_{k=1}^\infty k r^{-k}} =: c < \infty\,.
\end{split}
\]
When we let  $k \to \infty$ and use Lemma~\ref{lem:Linfty}, we obtain  that
$\Ha_{\infty}^{\delta}\text{-}\!\esssup_{x \in \Omega} |\tilde u| \le c$. The claim follows.
\end{proof}



\bibliographystyle{amsalpha}

\end{document}